\renewcommand{\@biblabel}[1]{\quad#1.}
\date{}
\theoremstyle{plain}
\newtheorem{thm}{Theorem}%[section]
  \theoremstyle{plain}
  \newtheorem{lem}[thm]{Lemma}
  \theoremstyle{definition}
  \theoremstyle{plain}
  \theoremstyle{remark}
\def\NN{{\mathbb{N}}}
\def\RR{{\mathbb{R}}}
\def\D{{\mathcal D}}
\def\Esp{{\mathbb{E}}}
\def\F{{\mathcal F}}
\def\Prob{{\mathbb{P}}}
\begin{document}

% Title must be 150 characters or less
\begin{flushleft}
{\Large 
\textbf{Estimation of Distribution Overlap of Urn Models}
}
% Insert Author names, affiliations and corresponding author email.
\\
Jerrad Hampton$^{1}$, 
Manuel E. Lladser$^{1,\ast}$
\\
\bf{1} Department of Applied Mathematics, University of Colorado, Boulder, Colorado, United States of America
\\
$\ast$ E-mail: manuel.lladser@colorado.edu\end{flushleft}

% Please keep the abstract between 250 and 300 words
\section*{Abstract}
A classical problem in statistics is estimating the expected coverage of a sample, which has had applications in gene expression, microbial ecology, optimization, and even numismatics. Here we consider a related extension of this problem to random samples of two discrete distributions. Specifically, we estimate what we call the dissimilarity probability of a sample, i.e., the probability of a draw from one distribution not being observed in $k$ draws from another distribution. We show our estimator of dissimilarity to be a $U$-statistic and a uniformly minimum variance unbiased estimator of dissimilarity over the largest appropriate range of $k$. Furthermore, despite the non-Markovian nature of our estimator when applied sequentially over $k$, we show it converges uniformly in probability to the dissimilarity parameter, and we present criteria when it is approximately normally distributed and admits a consistent jackknife estimator of its variance. As proof of concept, we analyze V35 16S rRNA data to discern between various microbial environments. Other potential applications concern any situation where dissimilarity of two discrete distributions may be of interest. For instance, in SELEX experiments, each urn could represent a random RNA pool and each draw a possible solution to a particular binding site problem over that pool. The dissimilarity of these pools is then related to the probability of finding binding site solutions in one pool that are absent in the other.

% Please keep the Author Summary between 150 and 200 words
% Use first person. PLoS ONE authors please skip this step. 
% Author Summary not valid for PLoS ONE submissions.   
%\section*{Author Summary}

\section*{Introduction}
An inescapable problem in microbial ecology is that a sample from an environment typically does not observe all species present in that environment. In~\cite{LladserGouetReeder}, this problem has been recently linked to the concepts of \emph{coverage probability} (i.e. the probability that a member from the environment is represented in the sample) and the closely related \emph{discovery} or \emph{unobserved probability} (i.e. the probability that a previously unobserved species is seen with another random observation from that environment). The mathematical treatment of coverage is not limited, however, to microbial ecology and has found applications in varied contexts, including gene expression, microbial ecology, optimization, and even numismatics.

The point estimation of coverage and discovery probability seem to have been first addressed by Turing and Good~\cite{Good} to help decipher the Enigma Code, and subsequent work has provided point predictors and prediction intervals for these quantities under various assumptions~\cite{Esty,Mao,LijoiEtAl,LladserGouetReeder}.

Following Robbins~\cite{Robbins} and in more generality Starr~\cite{Starr}, an unbiased estimator of the expected discovery probability of a sample of size $n$ is
\begin{equation}
\label{ide:Starr}
\sum_{k=1}^r\frac{{r-1\choose k-1}}{{n+r\choose k}}\cdot N(k,n+r),
\end{equation}
where $N(k,n+r)$ is the number of species observed exactly $k$-times in a sample with replacement of size $(n+r)$. Using the theory of U-statistics developed by Halmos~\cite{Halmos}, Clayton and Frees~\cite{ClaytonFrees} show that the above estimator is the \emph{uniformly minimum variance unbiased estimator} (UMVUE) of the expected discovery probability of a sample of size $n$ based on an enlarged sample of size $(n+r)$.

A quantity analogous to the discovery probability of a sample from a single environment but in the context of two environments is \emph{dissimilarity}, which we broadly define as the probability that a draw in one environment is not represented in a random sample (of a given size) from a possibly different environment. Estimating the dissimilarity of two microbial environments is therefore closely related to the problem of assessing the species that are unique to each environment, and the concept of dissimilarity may find applications to measure sample quality and allocate additional sampling resources, for example, for a more robust and reliable estimation of the UniFrac distance~\cite{LozuponeKnight,LozuponeEtAl} between pairs of environments. Dissimilarity may find applications in other and very different contexts. For instance, in SELEX experiments~\cite{TuerkGold}---a laboratory technique in which an initial pool of synthesized random RNA sequences is repeatedly screened to yield a pool containing only sequences with given biological functions---the dissimilarity of two RNA pools corresponds to the probability of finding binding site solutions in one pool that are absent in the other.

In this manuscript, we study an estimator of dissimilarity probability similar to Robbins' and Starr's statistic for discovery probability. Our estimator is optimal among the appropriate class of unbiased statistics, while being approximately normally distributed in a general case. The variance of this statistic is estimated using a consistent jackknife. As proof of concept, we analyze samples of processed V35 16S rRNA data from the Human Microbiome Project~\cite{HMPAnalysisPaper}.

%%%%%%%%%%%%%%%%%%%%%%
%%% Section: Model %%%
%%%%%%%%%%%%%%%%%%%%%%
\subsection*{Probabilistic Formulation and Inference Problem}

To study dissimilarity probability, we use the mathematical model of a pair of urns, where each urn has an unknown composition of balls of different colors, and where there is no a priori knowledge of the contents of either urn. Information concerning the urn composition is inferred from repeated draws with replacement from that urn.

In what follows, $X_1,X_2,\ldots$ and $Y_1,Y_2,\ldots$ are independent sequences of independent and identically distributed (i.i.d.) discrete random variables with probability mass functions $\Prob_x$ and $\Prob_y$, respectively. Without loss of generality we assume that $\Prob_x$ and $\Prob_y$ are supported over possibly infinite subsets of $\NN=\{1,2,3,\ldots\}$, and think of outcomes from these distributions as ``colors'': i.e. we speak of color-$1$, color-$2$, etc. Let $I_x$ denote the set of colors $i$ such that $\Prob_x(i)>0$, and similarly define $I_y$. Under this perspective, $X_k$ denotes the color of the $k$-th ball drawn with replacement from urn-$x$. Similarly, $Y_k$ is the color of the $k$-th ball drawn with replacement from urn-$y$. Note that based on our formulation, distinct draws are always independent.

The mathematical analysis that follows was motivated by the problem of estimating  the fraction of balls in urn-$x$ with a color that is absent in urn-$y$. We can write this parameter as
\begin{equation}
\label{Eqn:thetaAsympDef}
\theta_{x,y}(\infty):=\sum_{i\in (I_x\setminus I_y)}\Prob_x(i)=\lim_{k\to\infty}\theta_{x,y}(k),
\end{equation}
where
\begin{equation}
\label{Eqn:thetaDef}
\theta_{x,y}(k):=\sum_{i\in I_x}\Prob_x(i)(1-\Prob_y(i))^k=\Prob\left(X_{1} \notin \left\{Y_1,\ldots,Y_k\right\}\right).
\end{equation}
The parameter $\theta_{x,y}(\infty)$ measures the proportion of urn-$x$ which is unique from urn-$y$. On the other hand, $\theta_{x,y}(k)$ is a measure of the effectiveness of $k$-samples from urn-$y$ to determine uniqueness in urn-$x$. This motivates us to refer to the quantity in (\ref{Eqn:thetaAsympDef}) as the \emph{dissimilarity of urn-$x$ from urn-$y$}, and to the quantity in (\ref{Eqn:thetaDef}) as the \emph{average dissimilarity of urn-$x$ relative to $k$-draws from urn-$y$}. Note that these parameters are in general asymmetric in the roles of the urns. In what follows, urns-$x$ and -$y$ are assumed fixed, which motivates us to remove subscripts and write $\theta(k)$ instead of $\theta_{x,y}(k)$.

Unfortunately, one cannot estimate unbiasedly the dissimilarity of one urn from another based on finite samples, as stated in the following result. (See the Materials and Methods section for the proofs of all of our results.)

\begin{thm}
\label{thm:nounbiased}
(No unbiased estimator of dissimilarity.) There is no unbiased estimator of $\theta(\infty)$ based on finite samples from two arbitrary urns-$x$ and -$y$.
\end{thm}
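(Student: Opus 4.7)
The plan is to argue by contradiction, exploiting the rigidity of polynomial functions in contrast to the discontinuity of $\theta(\infty)$ as a functional of the underlying distributions. Fix arbitrary finite sample sizes $n$ and $m$ and suppose, for contradiction, that $T=T(X_1,\ldots,X_n,Y_1,\ldots,Y_m)$ is unbiased for $\theta(\infty)$ on every pair of urns.

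The first key step is to restrict attention to a carefully chosen one-parameter family of distributions. I would take $\Prob_x$ to be the point mass at color $1$, and let $\Prob_y$ place mass $p$ on color $1$ and mass $1-p$ on color $2$, with $p\in[0,1]$. On this family one computes directly from (\ref{Eqn:thetaAsympDef}) that $\theta(\infty)=0$ whenever $p>0$, because then $I_x=\{1\}\subseteq I_y$ and $I_x\setminus I_y=\emptyset$; while $\theta(\infty)=1$ when $p=0$, since then $I_x\setminus I_y=\{1\}$ and $\Prob_x(1)=1$. Thus the parameter $\theta(\infty)$ jumps discontinuously at $p=0$.

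The second step is to show that, on this family, $g(p):=\Esp_p[T]$ is a polynomial in $p$ of degree at most $m$. Because the $X_i$ are almost surely equal to $1$, one has
\[
g(p)=\sum_{(y_1,\ldots,y_m)\in\{1,2\}^m} T(1,\ldots,1,y_1,\ldots,y_m)\,p^{\#\{k:y_k=1\}}(1-p)^{\#\{k:y_k=2\}},
\]
a finite linear combination of $p^{j}(1-p)^{m-j}$. Unbiasedness forces $g(p)=0$ for every $p\in(0,1]$, an infinite set, so the polynomial $g$ is identically zero; in particular $g(0)=0$. This contradicts $\Esp_0[T]=\theta(\infty)=1$, completing the argument.

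I expect the only real obstacle to be stating cleanly why $\Esp[T]$ is a polynomial in the unknown probabilities (this uses the finiteness of $n$ and $m$ together with the compactly supported reduction), and being explicit that ``finite samples'' should be read as allowing arbitrary but fixed $n,m$, so the conclusion applies uniformly. Once the polynomial structure is in hand, the contradiction follows at once from the fundamental theorem of algebra, since a nonzero polynomial of degree at most $m$ has at most $m$ roots.
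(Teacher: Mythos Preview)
Your proposal is correct and is essentially the paper's own argument: you restrict to the same one-parameter family $\Prob_x=\delta_1$, $\Prob_y=(p,1-p)$, observe that the expectation of any statistic is a polynomial in $p$ of bounded degree, and derive a contradiction from the jump of $\theta(\infty)$ at $p=0$. The only cosmetic difference is that the paper phrases the contradiction via continuity of polynomials, whereas you invoke the fact that a polynomial with infinitely many zeros is identically zero; these are equivalent here.
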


Furthermore, estimating $\theta(\infty)$ accurately without further assumptions on the compositions of urns-$x$ and -$y$ seems a difficult if not impossible task. For instance, arbitrarily small perturbations of urn-$y$ are likely to be unnoticed in a sample of a given size from this urn but may drastically affect the dissimilarity of other urns from urn-$y$. To demonstrate this idea, consider a parameter $0\le\epsilon\le1$ and let $\mathbb{P}_x(1) := 1$, $\mathbb{P}_y(1) := \epsilon$ and $\mathbb{P}_y(2) := (1 - \epsilon)$. If $\epsilon = 0$ then $\theta(\infty) = 1$ while, for each $\epsilon >0$, $\theta(\infty) = 0$.

In contrast with the above, for fixed $k$, $\theta(k)$ depends continuously on $(\Prob_x,\Prob_y)$ e.g. under the metric
\[d\big((\Prob_x,\Prob_y),(\Prob_{x'},\Prob_{y'})\big):=\|\Prob_x-\Prob_{x'}\|+\|\Prob_y-\Prob_{y'}\|,\]
where $\|\nu\|:=\sup_{A\subset\NN}|\nu(A)|=\sum_i|\nu(i)|/2$ denotes the total variation of a signed measure $\nu$ over $\NN$ such that $\nu(\NN)=0$. This is the case because
\begin{eqnarray*}
\left|\sum_i\Prob_x(i)(1-\Prob_y(i))^k-\sum_i\Prob_{x'}(i)(1-\Prob_{y'}(i))^k\right|&\le& \sum_i|\Prob_x(i)-\Prob_{x'}(i)|+k\sum_i|\Prob_y(i)-\Prob_{y'}(i)|,\\
&\le& 2(k+1)\cdot d\big((\Prob_x,\Prob_y),(\Prob_{x'},\Prob_{y'})\big).
\end{eqnarray*}
The above implies that $\theta(k)$ is continuous with respect to any metric equivalent to $d$. Many such metrics can be conceived. For instance, if $(\Prob_x^{m}\times\Prob_y^{n})$ denotes the probability measure associated with $m$ samples with replacement from urn-$x$ that are independent of $n$ samples with replacement from urn-$y$ then $\theta(k)$ is also continuous with respect to any of the metrics $d_{m,n}\big((\Prob_x,\Prob_y),(\Prob_{x'},\Prob_{y'})\big):=\|(\Prob_x^m\times\Prob_y^n)-(\Prob_{x'}^m\times\Prob_{y'}^n)\|$, with $m,n\ge1$, because
\[d\big((\Prob_x,\Prob_y),(\Prob_{x'},\Prob_{y'})\big)/2\le d_{m,n}\big((\Prob_x,\Prob_y),(\Prob_{x'},\Prob_{y'})\big)\le \max\{m,n\}\cdot d\big((\Prob_x,\Prob_y),(\Prob_{x'},\Prob_{y'})\big).\]

Because of the above considerations, we discourage the direct estimation of $\theta(\infty)$ and focus on the problem of estimating $\theta(k)$ accurately.

% Results and Discussion can be combined.
\section*{Results}

Consider a finite number of draws with replacement $X_1,\ldots,X_{n_x}$ and $Y_1,\ldots,Y_{n_y}$, from urn-$x$ and urn-$y$, respectively, where $n_x,n_y\ge1$ are assumed fixed. Using this data we can estimate $\theta(k)$, for $k=1:n_y$, via the estimator:
\begin{equation}
\label{Eqn:CompEst}
\hat{\theta}(k) := \frac{1}{n_x{n_y \choose k}}\mathop{\sum}\limits_{j=0}^{n_y - k}{n_y - j \choose k} Q(j),
\end{equation}
where
\begin{equation}
\label{def:Q(j)}
Q(j):=\left\{\begin{array}{l}
\hbox{number of indices $i=1:n_x$ such that}\\
\hbox{color $X_i$ occurs $j$-times in $Y_1,\ldots,Y_{n_y}$.}
\end{array}\right.
\end{equation}
We refer to $Q(0),\ldots,Q(n_y)$ as the $Q$-statistics summarizing the data from both urns. Due to the well-known relation: $\sum_{i=1}^ri = r(r+1)/2$, at most $(1+\sqrt{2 n_y})$ of these estimators are non-zero. This sparsity may be exploited in the calculation of the right-hand side of (\ref{Eqn:CompEst}) over a large range of $k$'s.

Our statistic in $\hat\theta(k)$ is the U-statistic associated with the kernel $[\![X_1\notin\{Y_1,\ldots,Y_k\}]\!]$, where $[\![\cdot]\!]$ is used to denote the indicator function of the event within the brackets (Iverson's bracket notation). Following the approach by Halmos in~\cite{Halmos}, we can show that this U-statistic is optimal amongst the unbiased estimators of $\theta(k)$ for $k=1:n_y$. We note that no additional samples from either urn are necessary to estimate $\theta(k)$ unbiasedly over this range when $n_x\ge1$. This contrasts with the estimator in equation~(\ref{ide:Starr}), which requires sample enlargement for unbiased estimation of discovery probability of a sample of size $n$.

\begin{thm}
\label{Thm:UMVUE}
(Minimum variance unbiased estimator.) If $n_x \ge 1$ and $n_y \ge k$ then $\hat{\theta}(k)$ is the unique uniformly minimum variance unbiased estimator of $\theta(k)$. Further, no unbiased estimator of $\theta(k)$ exists for $n_x=0$ or $n_y<k$.
\end{thm}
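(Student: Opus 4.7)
My approach has three steps, following Halmos~\cite{Halmos}.

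First, I would identify $\hat\theta(k)$ in (\ref{Eqn:CompEst}) with the U-statistic associated with the kernel $\phi(X_1;Y_1,\ldots,Y_k):=[\![X_1\notin\{Y_1,\ldots,Y_k\}]\!]$. By definition this U-statistic averages $\phi(X_i;Y_{j_1},\ldots,Y_{j_k})$ over all $i\in\{1,\ldots,n_x\}$ and $k$-subsets $\{j_1<\cdots<j_k\}\subseteq\{1,\ldots,n_y\}$. For each $i$, if $c_i$ denotes the number of indices $j\le n_y$ with $Y_j=X_i$, then $\phi$ takes the value $1$ on exactly ${n_y-c_i\choose k}$ such subsets. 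Grouping the $n_x$ indices by the value of $c_i$ and using (\ref{def:Q(j)}) recovers the closed form $n_x{n_y\choose k}\hat\theta(k)=\sum_{j=0}^{n_y-k}{n_y-j\choose k}Q(j)$. Since $\Esp[\phi]=\theta(k)$ when $n_x\ge 1$ and $n_y\ge k$, this already establishes unbiasedness.

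Second, to upgrade unbiasedness to UMVUE I would invoke Halmos's completeness machinery. The joint order statistics of $(X_1,\ldots,X_{n_x})$ and $(Y_1,\ldots,Y_{n_y})$---equivalently, the multi-set of $X$-values together with the $Y$-side $Q$-statistics---are sufficient for the family of all pairs $(\Prob_x,\Prob_y)$ of discrete distributions on $\NN$, and a polynomial interpolation argument on arbitrary finite sub-supports shows this family to be complete. Since $\hat\theta(k)$ is symmetric separately in the $X$'s and in the $Y$'s, it is a function of this complete sufficient statistic; combined with unbiasedness, Lehmann--Scheff\'e then delivers the UMVUE conclusion and a.s.\ uniqueness.

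Third, for the non-existence assertions I would argue distribution-theoretically. If $n_x=0$, any statistic's expectation depends only on $\Prob_y$, whereas fixing $\Prob_y$ and varying $\Prob_x$ plainly changes $\theta(k)$; no single function of $\Prob_y$ can equal all these values. If $n_y<k$, I would restrict both distributions to be supported on $\{1,2\}$: fix $\Prob_x(1)=q$ with $q$ chosen so that $(1-q)+q(-1)^k\ne 0$, and let $p:=\Prob_y(1)$ vary over $[0,1]$. The expectation of any statistic $T(X_1,\ldots,X_{n_x},Y_1,\ldots,Y_{n_y})$ is then a polynomial in $p$ of degree at most $n_y$, while $\theta(k)=q(1-p)^k+(1-q)p^k$ has degree exactly $k>n_y$, a contradiction.

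The main obstacle I expect is the completeness step: Halmos's single-sample polynomial argument must be adapted to the product family $\Prob_x\times\Prob_y$ on $\NN\times\NN$. This reduces to a product of single-sample arguments after conditioning on one of the samples, but the details must be laid out carefully so that the hypotheses of Lehmann--Scheff\'e are verified for the joint sufficient statistic rather than for either marginal alone.
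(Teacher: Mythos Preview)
Your proposal is correct and essentially matches the paper's route: it too identifies $\hat\theta(k)$ with the U-statistic, establishes completeness of the pair of order statistics over all finitely supported $(\Prob_x,\Prob_y)$ via a two-variable homogeneous-polynomial vanishing argument extending Halmos, and handles non-existence by the same two-color degree count. One correction: your parenthetical that the joint order statistics are ``equivalently, the multi-set of $X$-values together with the $Y$-side $Q$-statistics'' is false---the $Q$-statistics discard all information about $Y$-colors absent from the $X$-sample---so work with the pair of order statistics directly; note also that the paper carries out completeness by a joint structural induction on the number of support points in each urn rather than by conditioning on one sample, though your suggested reduction can be made to work as well.
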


Our next result shows that $\hat\theta(k)$ converges uniformly in probability to $\theta(k)$ over the largest possible range where unbiased estimation of the later parameter is possible, despite the non-Markovian nature of $\hat\theta(k)$ when applied sequentially over $k$. The result asserts that $\hat\theta(k)$ is likely to be a good approximation of $\theta(k)$, uniformly for $k=1:n_y$, when $n_x$ and $n_y$ are large. The method of proof uses an approach by  Hoeffding~\cite{Hoeffding} for the exact calculation of the variance of a $U$-statistic.

\begin{thm}
\label{Thm:ASConv}
(Uniform convergence in probability.) Independently of how $n_x$ and $n_y$ tend to infinity, it follows for each $\epsilon>0$ that
\begin{equation}
\label{Eqn:ProbConv}
\mathop{\lim}\limits_{n_x,n_y\rightarrow\infty}\mathbb{P}\left(\mathop{\max}\limits_{k=1:n_y}|\hat{\theta}(k)-\theta(k)|>\epsilon\right) = 0.
\end{equation}
\end{thm}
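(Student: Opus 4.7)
My plan combines three ingredients: (i) the joint monotonicity in $k$ of $\theta(k)$ and $\hat\theta(k)$, (ii) Hoeffding's exact variance formula for two-sample $U$-statistics to control $\mathrm{Var}(\hat\theta(k))$, and (iii) a bracketing reduction that trades the $n_y$-long maximum for a supremum over only finitely many values. Monotonicity is immediate for $\theta(k)=\sum_i\Prob_x(i)(1-\Prob_y(i))^k$. For the estimator, one can rewrite
\[\hat\theta(k)=\frac{1}{n_x}\sum_{i=1}^{n_x}\frac{\binom{n_y-N_{X_i}}{k}}{\binom{n_y}{k}},\qquad N_x:=|\{j\le n_y:Y_j=x\}|,\]
and each summand---being the probability that a uniformly random $k$-subset of $\{1,\dots,n_y\}$ avoids $N_{X_i}$ prescribed positions---is manifestly non-increasing in $k$.

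Fix $\epsilon>0$. Since $\theta(k)\downarrow\theta(\infty)$, I would choose a deterministic $K=K(\epsilon)$ with $\theta(K)-\theta(\infty)<\epsilon/3$. For any $K\le k\le n_y$, the joint monotonicity yields
\[\hat\theta(n_y)-\theta(K)\le\hat\theta(k)-\theta(k)\le\hat\theta(K)-\theta(n_y),\]
whence $|\hat\theta(k)-\theta(k)|\le\max\bigl\{|\hat\theta(K)-\theta(K)|,\,|\hat\theta(n_y)-\theta(n_y)|\bigr\}+(\theta(K)-\theta(n_y))$, and this final summand is at most $\theta(K)-\theta(\infty)<\epsilon/3$. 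Combined with the trivial estimate $|\hat\theta(k)-\theta(k)|\le\max_{1\le j\le K}|\hat\theta(j)-\theta(j)|$ for $k<K$, the supremum over $k=1,\dots,n_y$ is controlled by the fluctuations at only the $K+1$ points $\{1,\dots,K\}\cup\{n_y\}$---a number depending on $\epsilon$ but not on the sample sizes. A Chebyshev-plus-union-bound then reduces the theorem to showing $\mathrm{Var}(\hat\theta(k))\to 0$ at each such point.

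For the fixed values $k=1,\dots,K$, Hoeffding's two-sample variance formula gives
\[\mathrm{Var}(\hat\theta(k))=\sum_{c=0}^{1}\sum_{d=0}^{k}\frac{\binom{1}{c}\binom{n_x-1}{1-c}}{n_x}\cdot\frac{\binom{k}{d}\binom{n_y-k}{k-d}}{\binom{n_y}{k}}\zeta_{c,d}\]
with $\zeta_{0,0}=0$; the $(c,d)=(1,0)$ term is $O(1/n_x)$, and every $(0,d)$-term with $d\ge1$ carries a coefficient of order $n_y^{-d}$, so $\mathrm{Var}(\hat\theta(k))\to 0$ jointly as $n_x,n_y\to\infty$. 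The delicate endpoint is $k=n_y$, where $\hat\theta(n_y)=Q(0)/n_x$. Here I would condition on $Y_1,\dots,Y_{n_y}$: the conditional variance is at most $1/(4n_x)$ because $Q(0)$ is a sum of $n_x$ conditionally independent indicators, while the variance of the conditional mean equals $\mathrm{Var}\bigl(\sum_{i\in I_x\cap I_y}\Prob_x(i)Z_i\bigr)$ with $Z_i:=[\![i\notin\{Y_1,\dots,Y_{n_y}\}]\!]$. The elementary inequality $(1-\Prob_y(i)-\Prob_y(j))^{n_y}\le(1-\Prob_y(i))^{n_y}(1-\Prob_y(j))^{n_y}$ shows that the $Z_i$'s are pairwise non-positively correlated, so the diagonal bound yields $\mathrm{Var}\bigl(\sum_{i\in I_x\cap I_y}\Prob_x(i)Z_i\bigr)\le\sum_{i\in I_x\cap I_y}\Prob_x(i)^2(1-\Prob_y(i))^{n_y}\le\theta(n_y)-\theta(\infty)$, which vanishes as $n_y\to\infty$. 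The main obstacle is precisely this boundary case $k=n_y$: the generic Hoeffding variance estimate becomes vacuous when the kernel degree matches the $Y$-sample size, so one must extract decay from the pairwise-covariance structure of the indicators $Z_i$. The monotonicity-bracketing device in the second step is what reduces the full uniform-in-$k$ statement to this single additional endpoint, bypassing any need for uniformity in the Hoeffding bound itself.
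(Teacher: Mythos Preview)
Your argument is correct, and the overall architecture---monotonicity bracketing plus variance control at the extreme points---is the same as the paper's. The execution differs in two places. First, the paper actually proves the stronger $L^2$ statement $\Esp\big(\max_{k}|\hat\theta(k)-\theta(k)|^2\big)\to 0$ (Lemma~\ref{Lem:L2Unif}, stated for general kernels); to make the deterministic bracketing term vanish it must let the threshold grow, taking $k_n=1+\min\{\lfloor n_x^{1/2}\rfloor,\lfloor\log n_y\rfloor\}$, and then it \emph{sums} the variances over $k=1{:}k_n$, which in turn requires the combinatorial asymptotics of Lemma~\ref{Lem:ChooseRelation}. Your fixed $K=K(\epsilon)$ avoids that lemma entirely and reduces the small-$k$ range to finitely many Chebyshev bounds---cleaner, but it only delivers the convergence in probability that is actually asserted. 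Second, at the endpoint $k=n_y$ both proofs need $\mathbb{V}\big(\Prob(X_1\notin\{Y_1,\dots,Y_{n_y}\}\mid Y_1,\dots,Y_{n_y})\big)\to 0$ (this is the paper's $\xi_{0,n_y}(n_y)$). The paper obtains this abstractly via Bounded Convergence, using $\xi_{0,k}(k)\le\mathbb{V}\big(h(X_1,Y_1,\dots,Y_k)-f(X_1)\big)\to 0$; your negative-correlation computation is more explicit and even yields the rate $\theta(n_y)-\theta(\infty)$, but is specific to the indicator kernel, whereas the paper's argument covers any bounded kernel satisfying the hypotheses of Lemma~\ref{Lem:L2Unif}.
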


We may estimate the variance of $\hat\theta(k)$ for $k=1:n_y$ via a leave-one-out or also called delete-$1$ jackknife estimator, using an approach studied by Efron and Stein~\cite{EfronStein} and Shao and Wu~\cite{ShaoWu}.

To account for variability in the $x$-data through a leave-one-out jackknife estimate, we require that $n_x\ge 2$ and let
\begin{eqnarray}
\label{Eqn:Vx}
S^2_x(k)& := &\frac{1}{n_x(n_x-1)}\sum\limits_{j=0}^{n_y-k}Q(j)\left(\frac{{n_y-j\choose k}}{{n_y\choose k}} - \hat{\theta}(k)\right)^2.
\end{eqnarray}

On the other hand, to account for variability in the $y$-data, consider for $i\ge1$ and $j\ge0$ the statistics
\begin{equation}
\label{def:M(i,j)}
M(i,j):=\left\{\begin{array}{l}
\hbox{number of colors $c$\, such\, that\, color\, $c$\, occurs\, exactly}\\
\hbox{$i$-times in $(X_1,\ldots,X_{n_x})$ and $j$-times in $(Y_1,\ldots,Y_{n_y})$.}
\end{array}\right.
\end{equation}
Clearly, $\sum_i i\,M(i,j)=Q(j)$; in particular, the $M$-statistics are a refinement of the $Q$-statistics. Define $S^2_y(n_y):=0$ and, for $k<n_y$, define
\begin{eqnarray}
\label{Eqn:Vy}
S^2_y(k)& := &\frac{n_y-1}{n_y}\mathop{\sum}\limits_{i=1}^{n_x}\mathop{\sum}\limits_{j=1}^{n_y-k}j\,M(i,j)\left(i(c_{j-1}(k)-c_j(k)) + \hat{\theta}_y(k) - \hat{\theta}(k)\right)^2,
\end{eqnarray}
where
\begin{eqnarray}
\label{Eqn:CjDef}
c_j(k) &:= & \frac{{n_y-j-1\choose k}}{n_x{n_y-1\choose k}};\\
\label{Eqn:ThetaYDef}
\hat{\theta}_y(k) &:= &\mathop{\sum}\limits_{j=0}^{n_y - k-1}c_j(k)\,Q(j).
\end{eqnarray}

Our estimator of the variance of $\hat\theta(k)$ is obtained by summing the variance attributable to the $x$-data and the $y$-data and is given by
\begin{equation}
\label{Eqn:Vhat}
S^2(k):= S^2_x(k)+ S^2_y(k),
\end{equation}
for $k=1:n_y$; in particular, $S(k)$ is our jackknife estimate of the standard deviation of $\hat{\theta}(k)$.

To assess the quality of $S^2(k)$ as an estimate of the variance of $\hat\theta(k)$ and the asymptotic distribution of the later statistic, we require a few assumptions that rule out degenerate cases. The following conditions are used in the remaining theorems in this section:
\begin{enumerate}
\item[(a)]  $|I_x\cap I_y|<\infty$.
\item[(b)]  there are at least two colors in $(I_x\cap I_y)$ that occur in different proportions in urn-$y$; in particular, the conditional probability $\Prob_y( \cdot\mid I_x\cap I_y)$ is not a uniform distribution.
\item[(c)]  urn-$x$ contains at least one color that is absent in urn-$y$; in particular, $\theta(\infty)>0$.
\item[(d)]  $n_x$ and $n_y$ grow to infinity at a comparable rate i.e. $n_x = \Theta(n_y)$, which means that there exist finite constants $c_1,c_2>0$ such that $c_1 n_y\le n_x\le c_2 n_y$, as $n_x,n_y$ tend to infinity.
\end{enumerate}

Conditions (a-c) imply that $\hat{\theta}(k)$ has a strictly positive variance and that a projection random variable, intermediate between $\hat{\theta}(k)$ and $\theta(k)$, has also a strictly positive variance. The idea of projection is motivated by the analysis of Grams and Serfling in~\cite{GramsSerfling}.

Condition (d) is technical and only used to show that the result in Theorem~\ref{Thm:CLT} holds for the largest possible range of values of $k$ namely, for $k=1:n_y$. See~\cite{HamptonDissertation} for results with uniformity related to Theorem~\ref{Thm:VarAcc}, as well as uniformity results when condition (d) is not assumed.

Because the variance of $\hat\theta(k)$, from now on denoted $\mathbb{V}(\hat\theta(k))$, and its estimate $S^2(k)$ tend to zero as $n_x$ and $n_y$ increase, the unnormalized consistency result is unsatisfactory. As an alternative, we can show that $S^2(k)$ is a consistent estimator relative to $\mathbb{V}(\hat{\theta}(k))$, as stated next.

\begin{thm}
\label{Thm:VarAcc}
(Asymptotic consistency of variance estimation.) If conditions (a)-(c) are satisfied then, for each $k\ge1$ and $\epsilon>0$, it applies that
\begin{align}
\label{Eqn:JackknifeUnif}
\mathop{\lim}\limits_{n_x,n_y\rightarrow\infty}\Prob\left(\left|\frac{S^2(k)}{\mathbb{V}(\hat{\theta}(k))}-1\right|>\epsilon\right)= 0.
\end{align}
\end{thm}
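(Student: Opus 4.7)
The strategy is to compare $\mathbb{V}(\hat{\theta}(k))$ and $S^2(k)$ to a common leading--order expression obtained from the two--sample Hoeffding decomposition of the U--statistic $\hat{\theta}(k)$. Since its kernel is $h(x;y_1,\ldots,y_k):=[\![x\notin\{y_1,\ldots,y_k\}]\!]$, the one--variable projections are
\[
\psi_x(x):=(1-\Prob_y(x))^k,\qquad \psi_y(y):=\Prob\bigl(X_1\notin\{y,Y_2,\ldots,Y_k\}\bigr),
\]
and Hoeffding's exact variance formula for the asymmetric $(1,k)$--kernel yields, for each fixed $k\ge 1$,
\[
\mathbb{V}(\hat{\theta}(k))\;=\;\frac{\sigma_x^2}{n_x}+\frac{k^2\,\sigma_y^2}{n_y}+O\!\left(\frac{1}{n_x n_y}+\frac{1}{n_y^{2}}\right),
\]
where $\sigma_x^2:=\mathbb{V}(\psi_x(X_1))$ and $\sigma_y^2:=\mathbb{V}(\psi_y(Y_1))$ are both strictly positive by conditions (a)--(c), as noted in the paragraph preceding the theorem. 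The conclusion (\ref{Eqn:JackknifeUnif}) thus reduces to showing $n_x\,S^2_x(k)\xrightarrow{\Prob}\sigma_x^2$ and $n_y\,S^2_y(k)\xrightarrow{\Prob}k^2\sigma_y^2$ as $n_x,n_y\to\infty$.

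For the $x$--piece, let $N_y(c)$ denote the number of occurrences of color $c$ in $(Y_1,\ldots,Y_{n_y})$, and observe that $W(X_i):=\binom{n_y-N_y(X_i)}{k}/\binom{n_y}{k}$ is exactly the conditional probability that $k$ draws without replacement from the $Y$--sample avoid $X_i$. By the strong law of large numbers, $N_y(c)/n_y\to\Prob_y(c)$ almost surely for each color $c$, so $W(X_i)\to\psi_x(X_i)$ almost surely on $\{X_i=c\}$. Rewriting (\ref{Eqn:Vx}) as $S^2_x(k)=\frac{1}{n_x(n_x-1)}\sum_{i=1}^{n_x}(W(X_i)-\hat\theta(k))^2$ and combining this pointwise convergence with Theorem~\ref{Thm:ASConv} and the standard consistency of sample variances of uniformly bounded quantities gives $n_x\,S^2_x(k)\xrightarrow{\Prob}\Esp[(\psi_x(X_1)-\theta(k))^2]=\sigma_x^2$.

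The $y$--piece is the main obstacle. First I would verify the combinatorial identity showing that (\ref{Eqn:Vy}) together with (\ref{Eqn:CjDef})--(\ref{Eqn:ThetaYDef}) is exactly the delete--one jackknife
\[
S^2_y(k)=\frac{n_y-1}{n_y}\sum_{\ell=1}^{n_y}\bigl(\hat\theta_{(\ell)}(k)-\hat\theta_y(k)\bigr)^{2},
\]
where $\hat\theta_{(\ell)}(k)$ is $\hat\theta(k)$ recomputed after removing $Y_\ell$ and $\hat\theta_y(k)=n_y^{-1}\sum_\ell\hat\theta_{(\ell)}(k)$: the role of the $M$--statistics is precisely to group the $\ell$--indices whose color appears $i$ times in the $X$--sample and $j$ times in the $Y$--sample, and a direct algebraic simplification converts $\hat\theta_{(\ell)}(k)-\hat\theta_y(k)$ into the bracketed expression $i(c_{j-1}(k)-c_j(k))+\hat\theta_y(k)-\hat\theta(k)$ appearing in (\ref{Eqn:Vy}). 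Next, I would linearize via an Efron--Stein / ANOVA argument in the spirit of Shao and Wu~\cite{ShaoWu} and Grams and Serfling~\cite{GramsSerfling}, showing $\hat\theta_{(\ell)}(k)-\hat\theta_y(k)=(k/n_y)(\psi_y(Y_\ell)-\theta(k))+o_{\Prob}(n_y^{-1/2})$ uniformly in $\ell$, and then conclude by a law of large numbers that $n_y\,S^2_y(k)\xrightarrow{\Prob} k^2\sigma_y^2$. Summing the two pieces, both $S^2(k)$ and $\mathbb{V}(\hat\theta(k))$ equal $\sigma_x^2/n_x+k^2\sigma_y^2/n_y+o(n_x^{-1}+n_y^{-1})$ to leading order with strictly positive leading coefficients, so the ratio in (\ref{Eqn:JackknifeUnif}) converges to $1$ in probability, independently of the relative rates of $n_x$ and $n_y$.
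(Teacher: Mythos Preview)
Your proposal is essentially correct and follows the same overall strategy as the paper: both identify the leading--order expression $\xi_{1,0}(k)/n_x+k^2\xi_{0,1}(k)/n_y$ (your $\sigma_x^2/n_x+k^2\sigma_y^2/n_y$) as the common target for both $\mathbb{V}(\hat\theta(k))$ and $S^2(k)$, treat the $x$-- and $y$--jackknife pieces separately, and conclude by a ratio argument. There are a few differences in packaging worth noting.

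For the $y$--piece, the paper obtains an \emph{exact} algebraic identity $\hat\theta^{(\ell)}_y(k)-\hat\theta(k)=(k/n_y)\bigl(\hat\theta_y^{\ell}(k)-\hat\theta_y^{\ell\prime}(k)\bigr)$, with the two U--statistics on the right defined from the leave--$Y_\ell$--out data, and then invokes Sen's almost--sure convergence theorem for U--statistics to get $\hat\theta_y^{\ell}(k)\to\theta(k)$ and $\hat\theta_y^{\ell\prime}(k)\to\psi_y(Y_\ell)$ for each fixed $\ell$. Consistency of $V(k):=n_y^{-1}\sum_\ell(\hat\theta_y^{\ell}-\hat\theta_y^{\ell\prime})^2$ then follows not from a uniform--in--$\ell$ linearization (which you propose and which would need extra justification), but from showing that the identically distributed summands have asymptotically vanishing covariance, via the Bounded Convergence Theorem. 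This sidesteps the uniformity issue cleanly, and you may find it easier to carry out than the Efron--Stein style remainder control you sketched.

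Two small corrections. First, the paper's jackknife $S_y^2(k)$ centers the leave--one--out estimates at $\hat\theta(k)$, not at their own average; the quantity $\hat\theta_y(k)$ in (\ref{Eqn:ThetaYDef}) is not $n_y^{-1}\sum_\ell\hat\theta_{(\ell)}(k)$ but rather a specific combinatorial piece arising in the derivation (the two differ by a negligible amount, so your argument survives). Second, the final step replacing $\mathbb{V}(\hat\theta_P(k))$ by $\mathbb{V}(\hat\theta(k))$ is exactly the content of the paper's projection lemma (equation (\ref{Eqn:ForJackknifeFixedk})), proved under conditions (a)--(c) alone for fixed $k$; your Hoeffding--formula expansion gives the same thing.
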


Finally, under conditions (a)-(d), we show that $\hat{\theta}(k)$ is asymptotically normally distributed for all $k=1:n_y$, as $n_x$ and $n_y$ increase at a comparable rate.

\begin{thm}
\label{Thm:CLT}
(Asymptotic normality.) Let $Z\sim\mathcal{N}(0,1)$ i.e. $Z$ has a standard normal distribution. If conditions (a)-(d) are satisfied then
\begin{equation}
\label{Eqn:CLTUniform}
\mathop{\lim}\limits_{n_x,n_y \rightarrow \infty}\,\,\mathop{\max}\limits_{k=1:n_y}\left|\Prob\left(\frac{\hat{\theta}(k)-\theta(k)}{\sqrt{\mathbb{V}(\hat\theta(k)}}\le t\right)-\Prob(Z\le t)\right|=0,
\end{equation}
for all real number $t$.
\end{thm}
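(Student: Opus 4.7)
The plan is to apply the projection technique for two-sample $U$-statistics, obtain asymptotic normality of the Hájek projection via Berry--Esseen, and upgrade to uniformity in $k=1:n_y$ by separately handling the regime where both components of the projection remain non-degenerate and the regime where the $y$-component is asymptotically negligible. Set
\[\psi_k(y):=\Esp\left[[\![X_1\notin\{Y_1,\ldots,Y_k\}]\!]\mid Y_1=y\right]\]
and write the Hájek projection of $\hat\theta(k)-\theta(k)$ as $\Pi(k):=T_x(k)+T_y(k)$, where
\[T_x(k):=\frac{1}{n_x}\mathop{\sum}\limits_{i=1}^{n_x}\left((1-\Prob_y(X_i))^k-\theta(k)\right),\qquad T_y(k):=\frac{k}{n_y}\mathop{\sum}\limits_{j=1}^{n_y}\left(\psi_k(Y_j)-\theta(k)\right).\]
These are independent, centered sums of bounded i.i.d.\ variables; the remainder $R(k):=\hat\theta(k)-\theta(k)-\Pi(k)$ is uncorrelated with $\Pi(k)$, and Hoeffding's variance formula for two-sample $U$-statistics gives $\mathbb{V}(R(k))$ of strictly smaller order than $\mathbb{V}(\Pi(k))$ under the non-degeneracy implied by (a)--(c).

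Letting $\sigma_x^2(k):=\mathbb{V}((1-\Prob_y(X_1))^k)$ and $\sigma_y^2(k):=\mathbb{V}(\psi_k(Y_1))$, two structural bounds drive the uniform argument: (i) condition (a) forces $(1-\Prob_y(X_1))^k$ to take only finitely many distinct values, and (b)--(c) then yield $\liminf_{k\to\infty}\sigma_x^2(k)=\theta(\infty)(1-\theta(\infty))>0$, whence a uniform lower bound $\sigma_x(k)\ge\sigma_\star>0$; (ii) from $\psi_k(y)=\theta(k-1)-\Prob_x(y)(1-\Prob_y(y))^{k-1}$ on $I_y$ and condition (a), one obtains $\sigma_y^2(k)=O((1-p_\star)^{2(k-1)})$ with $p_\star:=\min_{y\in I_x\cap I_y}\Prob_y(y)>0$, so $k^2\sigma_y^2(k)\to 0$ geometrically in $k$. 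Given $\delta>0$, pick $K=K(\delta)$ with $k^2\sigma_y^2(k)<\delta$ for $k\ge K$. For $k\le K$ both variance components are bounded below uniformly, and a direct Berry--Esseen bound for $\Pi(k)$ as a sum of $n_x+n_y$ bounded independent variables yields a Kolmogorov rate $O(n_y^{-1/2})$ under (d), uniformly in this range. For $k>K$, one has $\mathbb{V}(T_y(k))/\mathbb{V}(\Pi(k))\le Ck^2\sigma_y^2(k)\le C\delta$, hence by Chebyshev $T_y(k)/\sqrt{\mathbb{V}(\Pi(k))}$ is arbitrarily small in probability; combined with the uniform Berry--Esseen rate $O(n_x^{-1/2})$ for $T_x(k)/\sqrt{\mathbb{V}(T_x(k))}$ afforded by (i), and with Slutsky's lemma, this delivers the Gaussian approximation of $\Pi(k)/\sqrt{\mathbb{V}(\Pi(k))}$ uniformly for $k>K$.

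Finally, Theorem~\ref{Thm:VarAcc} (or a parallel direct computation) gives $\mathbb{V}(\Pi(k))/\mathbb{V}(\hat\theta(k))\to 1$ and $\mathbb{V}(R(k))/\mathbb{V}(\Pi(k))\to 0$ uniformly in $k$, so Slutsky transfers the convergence to $(\hat\theta(k)-\theta(k))/\sqrt{\mathbb{V}(\hat\theta(k))}$, yielding~(\ref{Eqn:CLTUniform}). The main obstacle is the uniform handling of the $y$-component: its standalone Berry--Esseen rate $n_y^{-1/2}\sigma_y(k)^{-1}$ explodes as $\sigma_y(k)\to 0$, so no single joint bound can cover $k=1:n_y$. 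The split at $K(\delta)$, the geometric decay $k^2\sigma_y^2(k)\to 0$, and condition (d) synchronizing the $x$- and $y$-Berry--Esseen rates into a single $O(n_y^{-1/2})$ form are the technical heart of the argument.
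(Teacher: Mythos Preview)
Your projection-plus-Berry--Esseen-plus-Slutsky template matches the paper's, and your structural inputs (the uniform lower bound $\sigma_x^2(k)\ge\sigma_\star>0$ from (\ref{Eqn:XiC2}) and the geometric decay of $\sigma_y^2(k)$ from (\ref{Eqn:Xi1})) are exactly the ones the paper isolates in Lemma~\ref{Lem:Xi}. The argument is essentially correct.

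The substantive difference is in how you obtain uniformity of the Berry--Esseen step over $k=1:n_y$. You assert that ``no single joint bound can cover $k=1:n_y$'' and therefore split at a threshold $K(\delta)$, handling $T_y(k)$ as a Slutsky perturbation when $k>K$. The paper shows this split is unnecessary: applying Berry--Esseen directly to the \emph{full} projection $\Pi(k)$, normalized by $\sqrt{\mathbb{V}(\Pi(k))}$, the Lyapunov ratio is
\[
\frac{n_x^{-2}\eta_{1,0}(k)+k^3 n_y^{-2}\eta_{0,1}(k)}{\big(n_x^{-1}\xi_{1,0}(k)+k^2 n_y^{-1}\xi_{0,1}(k)\big)^{3/2}},
\]
and since the third moment $\eta_{0,1}(k)\le\xi_{0,1}(k)$ decays geometrically just as the variance does, the $y$-contribution to the numerator is $O(n_y^{-2})$ uniformly in $k$; bounding the denominator below by $(\inf_k\xi_{1,0}(k)/n_x)^{3/2}$ and using (d) gives a single $O(n_x^{-1/2})$ rate for all $k$. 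Your two-regime argument works but is more elaborate than needed; the key observation you miss is that the degeneracy of the $y$-component helps rather than hurts in the joint Lyapunov ratio. Also note that the uniform remainder control $\mathbb{V}(R(k))/\mathbb{V}(\Pi(k))\to 0$ is not a consequence of Theorem~\ref{Thm:VarAcc} (which concerns the jackknife $S^2(k)$) but requires its own two-regime argument, supplied in the paper as Lemma~\ref{Lem:Rk}.
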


The non-trivial aspect of the above result is the asymptotic normality of $\hat\theta(k)$ when $k=\Theta(n_y)$, e.g. $\hat\theta(n_y)$, as the results we have found in the literature~\cite{Hoeffding, Ahmad, CallaertJanssen} only guarantee the asymptotic normality of our estimator of $\theta(k)$ for fixed $k$. We note that, due to Slutsky's theorem~\cite{Slutsky}, it follows from (\ref{Eqn:JackknifeUnif}) and (\ref{Eqn:CLTUniform}) that the ratio
\[\frac{\hat\theta(k)-\theta(k)}{S(k)}\]
has, for fixed $k$, approximately a standard normal distribution when $n_x$ and $n_y$ are large and of a comparable order of magnitude.

\section*{Discussion}
As proof of concept, we use our estimators to analyze data from the Human Microbiome Project (HMP)~\cite{HMPAnalysisPaper}. In particular, our samples are V35 16S rRNA data, processed by Qiime into an operational taxonomic unit (OTU) count table format (see File S1 in Supporting Information). Each of the $266$ samples analyzed have more than $5000$ successfully identified bacteria (see File S2 in Supporting Information). We sort these samples by the body location metadata describing the origin of the sample. This sorting yields the assignments displayed in Table~\ref{tab:1}.

We present our estimates of $\hat{\theta}(n_y)$ for all $266\cdot265$ possible sample comparisons in Figure~\ref{Fig:ThetaMap}, i.e., we estimate the average dissimilarity of sample-$x$ relative to the full sample-$y$. Due to (\ref{Eqn:CompEst}), observe that $\hat\theta(n_y)=Q(0)/n_y$. At the given sample sizes, we can differentiate four broad groups of environments: stool, vagina, oral/throat and skin/nostril. We differentiate a larger proportion of oral/throat bacteria found in stool than stool bacteria found in the oral/throat environments. We may also differentiate the throat, gingival and saliva samples, but cannot reliably differentiate between tongue and throat samples or between the subgingival and supragingival plaques. On the other hand, the stool samples have larger proportions of unique bacteria relative to other stool samples of the same type, and  vaginal samples also have this property. In contrast the skin/nostril samples have relatively few bacteria that are not identified in other skin/nostril samples.

The above effects may be a property of the environments from which samples are taken, or an effect of noise from inaccurate estimates due to sampling. To rule out the later interpretation, we show estimates of the standard deviation of $\hat{\theta}(n_y)$ based on the jackknife estimator $S^2(n_y)$ from (\ref{Eqn:Vhat}) in Figure~\ref{Fig:ThetaVarMap}. As $S_{n_y}^2(n_y)$ is zero, the error estimate is given by $S_x(n_y)$. We see from (\ref{Eqn:Vx}), with $k=n_y$, that 
\[S(n_y)=\sqrt{\frac{\hat\theta(n_y)\cdot(1-\hat\theta(n_y))}{n_x-1}}.\]
Assuming a normal distribution and an accurate jackknife estimate of variance, $\theta(n_y)$ will be in the interval $\hat{\theta}(n_y)\pm 0.01$ with at least approximately 95\% confidence, for any choice of sample comparisons in our data; in particular, on a linear scale, we expect at least 95\% of the estimates in Figure~\ref{Fig:ThetaMap} to be accurate in at least the first two digits.

As we mentioned earlier, estimating $\theta(\infty)$ accurately is a difficult problem. We end this section with two heuristics to assess how representative $\hat\theta(n_y)$ is of $\theta(\infty)$, when urn-$y$ has at least two colors and at least one color in common with urn-$x$. First, observe that:
\begin{align}
\label{Eqn:ThetaKApproxStar}
\theta(k)&=\theta(\infty)+\sum_{i\in(I_x\cap I_y)}\Prob_x(i)(1-\Prob_y(i))^k.
\end{align}
In particular, $\theta(k)$ is a strictly concave-up and monotonically decreasing function of the real-variable $k\ge 0$. Hence, if $\theta(n_y)$ is close to the asymptotic value $\theta(\infty)$, then $\theta(n_y)-\theta(n_y-1)$ should be of small magnitude. We call the later quantity the \emph{discrete derivative} of $\theta(k)$ at $k=n_y$. Since we may estimate the discrete derivative from our data, the following heuristic arises: \emph{relatively large values of $|\hat\theta(n_y)-\hat\theta(n_y-1)|$ are evidence that $\hat\theta(n_y)$ is not a good approximation of $\theta(\infty)$.}

Figure~\ref{Fig:DThetaMap} shows the heat map of  $|\hat\theta(n_y)-\hat\theta(n_y-1)|$ for each pair of samples. These estimates are of order $10^{-5}$ for the majority of the comparisons, and spike to $10^{-4}$ for several sample-$y$ of varied environment types, when sample-$x$ is associated with a skin or vaginal sample. In particular, further sampling effort from environments associated with certain vaginal, oral or stool samples are likely to reveal bacteria associated with broadly defined skin or vaginal environments.

Another heuristic may be more useful to assess how close $\hat\theta(n_y)$ is to $\theta(\infty)$, particularly when the previous heuristic is inconclusive. As motivation, observe that $\theta(k)=\theta(\infty)+\Theta(\rho^k)$, because of the identity in (\ref{Eqn:ThetaKApproxStar}), where
\[\rho:=1-\min_{i\in(I_x\cap I_y)}\Prob_y(i).\]
Furthermore, $\log(\theta(k-1)-\theta(k))=k(\ln\rho)+c+o(1)$, where $c$ is certain finite constant. We can justify this approximation only when $\log(\theta(k-1)-\theta(k))$ is well approximated by a linear function of $k$, in which case we let $\hat\rho$ denote the estimated value for $\rho$ obtained from the linear regression. Since $0\le\theta(n_y)-\theta(\infty)\le\rho^{n_y}$, the following more precise heuristic comes to light: \emph{$\hat\theta(n_y)$ is a good approximation of $\theta(\infty)$ if the linear regression of $\log|\hat\theta(k-1)-\hat\theta(k)|$ for $k$ near $n_y$ gives a good fit, $S(n_y)$ is small relative to $\hat\theta(n_y)$, and $\hat\rho^{n_y}$ is also small.}

To fix ideas we have applied the above heuristic to three pairs of samples: $(255,176)$, $(200,139)$ and $(100,10)$, with each ordered pair denoting urn-$x$ and urn-$y$, respectively. As seen in Table~\ref{tab:2} for these three cases, $\hat{\theta}(n_y)$ is at least 14-times larger than $S(n_y)$; in particular, due to the asymptotic normality of the later statistic, an appropriate use of the heuristic is reduced to a good linear fit and a small $\hat\rho^{n_y}$ value. In all three cases, $\hat\rho$ was computed from the estimates $\hat\theta(k)$, with $k=5001:n_y$.

For the $(255,176)$-pair, $\hat\rho^{n_y}$ and the regression error, measured as the largest absolute residual associated with the best linear fit, are zero to machine precision, suggesting that $\hat{\theta}(n_y)=0.9998$ is a good approximation of $\theta(\infty)$. This is reinforced by the blue plot in Figure~\ref{Fig:ThetaCurveMap}. On the other hand, for the $(200,139)$-pair, the regression error is small, suggesting that the linear approximation $\log(\hat\theta(k-1)-\hat\theta(k))$ is good for $k=5001:n_y$. However, because $\hat\rho^{n_y}=0.9997$, we cannot guarantee that $\hat\theta(n_y)$ is a good approximation of $\theta(\infty)$. In fact, as seen in the red-plot in Figure~\ref{Fig:ThetaCurveMap}, $\hat\theta(k)$, with $k=1:n_y$, exposes a steady and almost linear decay that suggests that $\theta(\infty)$ may be much smaller than $\hat\theta(n_y)$. Finally, for the $(100,10)$-pair, the regression error is large and the heuristic is therefore inconclusive. Due to the green-plot in Figure~\ref{Fig:ThetaCurveMap}, the lack of fit indicates that the exponential rate of decay of $\theta(k)$ to $\theta(\infty)$ has not yet been captured by the data from these urns. Note that the heuristic based on the discrete derivative shows no evidence that $\hat\theta(n_y)$ is far from $\theta(\infty)$.

% You may title this section "Methods" or "Models". 
% "Models" is not a valid title for PLoS ONE authors. However, PLoS ONE
% authors may use "Analysis" 
\section*{Materials and Methods}
Here we prove the theorems given in the Results section. The key idea to prove each theorem may be summarized as follows.

To show Theorem~\ref{thm:nounbiased}, we identify pairs of urns for which unbiased estimation of $\theta(\infty)$ is impossible for any statistic. To show Theorem~\ref{Thm:UMVUE}, we exploit the diversity of possible urn distributions to show that there are relatively few unbiased estimators of $\theta(k)$ and, in fact, there is a single unbiased estimator $\hat\theta(k)$ that is symmetric on the data. The uniqueness of the symmetric estimator is obtained via a completeness argument: a symmetric statistic having expected value zero is shown to correspond to a polynomial with identically zero coefficients, which themselves correspond to values returned by the statistic when presented with specific data. The symmetric estimator is a U-statistic in that it corresponds to an average of unbiased estimates of $\theta(k)$, based on all possible sub-samples of size $1$ and $k$ from the samples of urn-$x$ and -$y$, respectively. As any asymmetric estimator has higher variance than a corresponding symmetric estimator, the symmetric estimator must be the UMVUE. 

To show Theorem~\ref{Thm:ASConv} we use bounds on the variance of the U-statistic and show that, uniformly for relatively small $k$, $\hat\theta(k)$ converges to $\theta(k)$ in the $\mathcal{L}^2$-norm. In contrast, for relatively large values of $k$, we exploit the monotonicity of $\theta(k)$ and $\hat{\theta}(k)$ to show uniform convergence.% Instead, we exploit the monotonicity of $\theta(k)$ and $\hat\theta(k)$ to show uniform convergence for relatively large values of $k$.

Finally, theorems~\ref{Thm:VarAcc} and~\ref{Thm:CLT} are shown using an approximation of $\hat{\theta}(k)$ by sums i.i.d. random variables, as well as results concerning the variance of both $\hat{\theta}(k)$ and its approximation. In particular, the approximation satisfies the hypotheses the Central Limit Theorem and Law of Large Numbers, which we use to transfer these results to $\hat{\theta}(k)$.

In what follows, $\D$ denotes the set of all probability distributions that are finitely supported over $\NN$.\\

\noindent\textbf{Proof of Theorem~\ref{thm:nounbiased}.} Consider in $\D$ probability distributions of the form $\Prob_x(1)=1$, $\Prob_y(1)=u$ and $\Prob_y(2)=(1-u)$, where $0\le u\le1$ is a given parameter. Any statistic $h(\cdot)$ which takes as input $n_x$ draws from urn-$x$ and $n_y$ draws from urn-$y$ has that $\Esp(h(X_1,\ldots,X_{n_x},Y_1,\ldots,Y_{n_y}))$ is a polynomial of degree at most $n_y$ in the variable $u$; in particular, it is a continuous function of $u$ over the interval $[0,1]$. Since $\theta(\infty)=[\![u=0]\!]$ has a discontinuity at $u=0$ over this interval, there exists no estimator of $\theta(\infty)$ that is unbiased over pairs of distributions in $\D$.\hfill$\Box$\\

We use lemmas~\ref{Lem:ThetaMin}-\ref{Lem:f=0} to first show Theorem~\ref{Thm:UMVUE}. The method of proof of this theorem follows an approach similar to the one used by Halmos~\cite{Halmos} for single distributions, which we extend here naturally to the setting of two distributions.

Our next result implies that no uniformly unbiased estimator of $\theta(k)$ is possible when using less than one sample from urn-$x$ and $k$ samples from urn-$y$.

\begin{lem}
\label{Lem:ThetaMin}
If $g(X_{1},\ldots,X_{m},Y_{1},\ldots,Y_{n})$ is unbiased for $\theta(k)$ for all $\Prob_x,\Prob_y\in\D$, then $m\ge1$ and $n\ge k$.
\end{lem}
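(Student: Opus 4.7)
\noindent\textbf{Proof proposal for Lemma~\ref{Lem:ThetaMin}.} The plan is to show each inequality by a polynomial-degree/dependence argument against specific sub-families of distributions in $\D$, leveraging the structure of $\theta(k)=\sum_i\Prob_x(i)(1-\Prob_y(i))^k$.

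First I would handle the easier inequality $m\ge1$. Suppose $m=0$, so $g=g(Y_1,\ldots,Y_n)$. Then $\Esp(g)$ is determined by $\Prob_y$ alone, independently of $\Prob_x$. To contradict this, I would exhibit two choices of $\Prob_x$ sharing a common $\Prob_y$ but yielding different values of $\theta(k)$: e.g.\ fix $\Prob_y(1)=1/3$, $\Prob_y(2)=2/3$, and compare $\Prob_x(1)=1$ (which gives $\theta(k)=(2/3)^k$) with $\Prob_x(2)=1$ (which gives $\theta(k)=(1/3)^k$). These are unequal for every $k\ge1$, so $\Esp(g)$ cannot coincide with $\theta(k)$ on both pairs. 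Hence $m\ge1$.

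Next I would tackle $n\ge k$, which is the substantive part. Specialize to $\Prob_x(1)=1$ and let $\Prob_y(1)=u$, $\Prob_y(2)=1-u$ with $u\in[0,1]$ as a free parameter. Under this restriction, each $X_i$ equals $1$ almost surely, so
\[
\Esp\bigl(g(X_1,\ldots,X_m,Y_1,\ldots,Y_n)\bigr)=\sum_{(y_1,\ldots,y_n)\in\{1,2\}^n} g(1,\ldots,1,y_1,\ldots,y_n)\,u^{\#\{j:y_j=1\}}(1-u)^{\#\{j:y_j=2\}},
\]
which is a polynomial in $u$ of degree at most $n$. On the other hand, with this choice of $\Prob_x,\Prob_y$ one has $\theta(k)=(1-u)^k$, a polynomial of degree exactly $k$ in $u$ (its leading coefficient is $(-1)^k\ne0$). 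Since unbiasedness requires the two polynomials to agree for all $u\in[0,1]$, and hence identically as polynomials, their degrees must be compatible, forcing $n\ge k$.

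The argument is essentially a dimension count, so I do not anticipate a serious technical obstacle; the main thing to be careful about is to choose the test family in $\D$ narrowly enough that the left-hand side collapses to a univariate polynomial in one parameter, so that degrees can be compared directly. Restricting to $\Prob_x$ degenerate at a single color (for the $m\ge1$ step) and to a two-color $\Prob_y$ parametrized by a single $u$ (for the $n\ge k$ step) accomplishes precisely this, and both specializations lie in $\D$, so the uniform unbiasedness hypothesis applies. \hfill$\Box$
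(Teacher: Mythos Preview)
Your proof is correct and follows essentially the same approach as the paper: parametrize a sub-family of $\D$ and compare polynomial degrees. The only difference is packaging: the paper handles both inequalities at once by taking a two-color urn-$x$ as well, with $\Prob_x(1)=u$, $\Prob_x(2)=1-u$, $\Prob_y(1)=v$, $\Prob_y(2)=1-v$, so that $\Esp(g)$ is a polynomial of degree at most $m$ in $u$ and at most $n$ in $v$, while $\theta(k)=u(1-v)^k+(1-u)v^k$ has degree exactly $1$ in $u$ and $k$ in $v$, yielding $m\ge1$ and $n\ge k$ simultaneously.
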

\begin{proof}
Consider in $\D$ probability distributions of the form $\Prob_x(1) = u$, $\Prob_x(2) = (1-u)$, $\Prob_y(1) = v$ and $\Prob_y(2) = (1-v)$, where $0\le u,v\le 1$ are arbitrary real numbers. Clearly, $\Esp[g(X_1,\ldots,X_m,Y_1,\ldots,Y_n)]$ is a linear combination of polynomials of degree $m$ in $u$ and $n$ in $v$ and, as a result, it is a polynomial of degree at most $m$ in $u$ and $n$ in $v$. Since $\theta(k)=u(1-v)^k + (1-u) v^k$ has degree $1$ in $u$ and $k$ in $v$, and $g(X_1,\ldots,X_m,Y_1,\ldots,Y_n)$ is unbiased for $\theta(k)$, we conclude that $1\le m$ and $k\le n$.
\end{proof}

The form of $\hat{\theta}(k)$ given in equation~(\ref{Eqn:CompEst}) is convenient for computation but, for mathematical analysis, we prefer its $U$-statistic form associated with the kernel function $(x,y_1,\ldots,y_k)\to[\![x\notin\{y_1,\ldots,y_k\}]\!]$.

In what follows, $S_{k,n}$ denotes the set of all functions $\sigma:\{1,\ldots,k\}\to\{1,\ldots,n_y\}$ that are one-to-one.

\begin{lem}
\label{Lem:UStatDef}
\begin{equation}
\label{Eqn:UStatDef}
\hat{\theta}(k) = \frac{1}{n_x|S_{k,n_y}|}\sum\limits_{i=1}^{n_x}\sum\limits_{\sigma\in S_{k,n_y}}[\![X_i\notin\{Y_{\sigma(1)},\ldots,Y_{\sigma(k)}\}]\!],
\end{equation}
where $|S_{k,n_y}|=k!{n_y \choose k}$.
\end{lem}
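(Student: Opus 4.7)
The plan is to rewrite the double sum on the right-hand side of (\ref{Eqn:UStatDef}) by partitioning the terms according to the number of times the color $X_i$ appears in the full $y$-sample, then collect terms using the $Q$-statistics.

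First, for each fixed index $i \in \{1,\ldots,n_x\}$, I would compute the inner sum $\sum_{\sigma \in S_{k,n_y}} [\![X_i \notin \{Y_{\sigma(1)},\ldots,Y_{\sigma(k)}\}]\!]$. Let $J_i$ denote the (random) number of indices $\ell \in \{1,\ldots,n_y\}$ such that $Y_\ell$ equals the color $X_i$. An injection $\sigma:\{1,\ldots,k\}\to\{1,\ldots,n_y\}$ contributes to the sum precisely when the image of $\sigma$ avoids all $J_i$ of these forbidden indices, i.e. when $\sigma$ is an injection into the remaining set of $(n_y - J_i)$ indices. The count of such injections is $k!\binom{n_y - J_i}{k}$ (which is zero if $J_i > n_y - k$, consistent with the claim below).

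Next, I would group the $n_x$ terms according to the value of $J_i$. By the definition (\ref{def:Q(j)}), the number of indices $i$ with $J_i = j$ is exactly $Q(j)$. Hence
\begin{equation*}
\sum_{i=1}^{n_x} \sum_{\sigma \in S_{k,n_y}} [\![X_i \notin \{Y_{\sigma(1)},\ldots,Y_{\sigma(k)}\}]\!] \;=\; \sum_{j=0}^{n_y-k} Q(j)\, k!\binom{n_y-j}{k}.
\end{equation*}
Dividing by $n_x |S_{k,n_y}| = n_x k!\binom{n_y}{k}$, the factor $k!$ cancels and we recover precisely the definition of $\hat\theta(k)$ in (\ref{Eqn:CompEst}). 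The identity $|S_{k,n_y}| = k!\binom{n_y}{k}$ is just the standard count of injections from a $k$-set into an $n_y$-set.

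There is no real obstacle here: the argument is purely a combinatorial bookkeeping that re-expresses the average over all length-$k$ ordered subsamples of $Y_1,\ldots,Y_{n_y}$ in terms of the $Q$-statistics, which already encode the frequencies needed. The only point worth a brief sentence is that the upper limit $n_y - k$ on $j$ arises naturally from the vanishing of $\binom{n_y-j}{k}$ when $j > n_y - k$, so that terms with $j$ outside $\{0,\ldots,n_y-k\}$ drop out on both sides.
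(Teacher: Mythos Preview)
Your proposal is correct and follows essentially the same approach as the paper's own proof: fix $i$, count injections $\sigma$ whose image avoids the $J_i$ positions where $Y_\ell=X_i$ to get $k!\binom{n_y-J_i}{k}$, then group by the value $J_i=j$ using the $Q$-statistics and cancel the $k!$. The only cosmetic difference is your explicit notation $J_i$, which the paper leaves implicit.
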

\begin{proof}
Fix $1\le i\le n_x$ and suppose that color $X_i$ occurs $j$-times in $Y_1,\ldots,Y_{n_y}$. If $j>(n_y-k)$ then any sublist of size $k$ of $Y_1,\ldots,Y_{n_y}$ contains $X_i$, hence $[\![X_i\notin\{Y_{\sigma(1)},\ldots,Y_{\sigma(k)}\}]\!]=0$, for all $\sigma\in S_{k,n_y}$. On the other hand, if $j\le(n_y-k)$ then $\sum_{\sigma\in S_{k,n_y}}[\![X_i\notin\{Y_{\sigma(1)},\ldots,Y_{\sigma(k)}]\!]=k!{n_y-j\choose k}$. Since the rightmost sum only depends on the number of times that color $X_i$ was observed in $Y_1,\ldots,Y_{n_y}$, we may use the $Q$-statistics defined in equation (\ref{def:Q(j)}) to rewrite:
\[\frac{1}{n_x|S_{k,n_y}|}\sum\limits_{i=1}^{n_x}\sum\limits_{\sigma\in S_{k,n_y}}[\![X_i\notin\{Y_{\sigma(1)},\ldots,Y_{\sigma(k)}\}]\!]=\frac{1}{n_x{n_y\choose k}}\sum_{j=0}^{n_y-k}{n_y-j\choose k} Q(j).\]
The right-hand side above now corresponds to the definition of $\hat{\theta}(k)$ given in equation~(\ref{Eqn:CompEst}).
\end{proof}

In what follows, we say that a function $f:\NN^{n_x+n_y}\to\RR$ is \emph{$(n_x,n_y)$-symmetric} when
\[f(x_1,\ldots,x_{n_x};y_1,\ldots,y_{n_y})=f(x_{\sigma(1)},\ldots,x_{\sigma(n_x)};y_{\sigma'(1)},\ldots,y_{\sigma'(n_y)}),\]
for all $x_1,\ldots,x_{n_x},y_1,\ldots,y_{n_y}\in\NN$ and permutations $\sigma$ and $\sigma'$ of $1,\ldots,n_x$ and $1,\ldots,n_y$, respectively. Alternatively, $f$ is $(n_x,n_y)$-symmetric if and only if it may be regarded a function of $(x_{(1\ldots n_x)},y_{(1\ldots n_y)})$, where $x_{(1\ldots n_x)}$ and $y_{(1\ldots n_y)}$ correspond to the order statistics $x_{(1)},\ldots,x_{(n_x)}$ and $y_{(1)},\ldots,y_{(n_y)}$, respectively. Accordingly, a statistic of $(X_1,\ldots,X_{n_x},Y_1,\ldots,Y_{n_y})$ is called \emph{$(n_x,n_y)$-symmetric} when it may be represented in the form $f(X_1,\ldots,X_{n_x},Y_1,\ldots,Y_{n_y})$, for some $(n_x,n_y)$-symmetric function $f$.
It is immediate from Lemma~\ref{Lem:UStatDef} that $\hat\theta(k)$ is $(n_x,n_y)$-symmetric. 

The next result asserts that the variance of any non-symmetric unbiased estimator of $\theta(k)$ may be reduced by a corresponding symmetric unbiased estimator. The proof is based on the well-known fact that conditioning preserves the mean of a statistic and cannot increase its variance.

\begin{lem}
\label{Lem:SymUMVUE}
An asymmetric unbiased estimator of $\theta(k)$ that is square-integrable has a strictly larger variance than a corresponding $(n_x,n_y)$-symmetric unbiased estimator.
\end{lem}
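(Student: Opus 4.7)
The plan is to use a Rao--Blackwell style argument based on the sufficiency of the order statistics for i.i.d.\ samples. Given any unbiased estimator $T=g(X_1,\ldots,X_{n_x};Y_1,\ldots,Y_{n_y})$ of $\theta(k)$, I would define the symmetrized version
\[
T^*:=\frac{1}{n_x!\,n_y!}\sum_{\sigma\in S_{n_x}}\sum_{\tau\in S_{n_y}}g\bigl(X_{\sigma(1)},\ldots,X_{\sigma(n_x)};Y_{\tau(1)},\ldots,Y_{\tau(n_y)}\bigr),
\]
where $S_{n_x}$, $S_{n_y}$ are the symmetric groups on $\{1,\ldots,n_x\}$ and $\{1,\ldots,n_y\}$. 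By construction $T^*$ is $(n_x,n_y)$-symmetric, and because the $X_i$ are i.i.d.\ and independent of the i.i.d.\ $Y_j$, each summand has the same distribution as $g(X_1,\ldots,X_{n_x};Y_1,\ldots,Y_{n_y})$, so $\mathbb{E}[T^*]=\theta(k)$ for every $(\Prob_x,\Prob_y)\in\D\times\D$, i.e.\ $T^*$ is an unbiased and $(n_x,n_y)$-symmetric estimator of $\theta(k)$.

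Next I would identify $T^*$ as a conditional expectation. Because $(X_1,\ldots,X_{n_x})$ are i.i.d., the conditional law of $(X_1,\ldots,X_{n_x})$ given the order statistics $X_{(1)},\ldots,X_{(n_x)}$ is uniform over the $n_x!$ permutations of these values, and analogously for the $Y$'s; moreover, the two conditional laws are independent. Hence
\[
T^*=\mathbb{E}\bigl[T\mid X_{(1)},\ldots,X_{(n_x)},Y_{(1)},\ldots,Y_{(n_y)}\bigr]\quad\text{a.s.}
\]
Applying the law of total variance then gives
\[
\mathbb{V}(T)=\mathbb{V}(T^*)+\mathbb{E}\bigl[\mathbb{V}(T\mid \text{order statistics})\bigr]\ge \mathbb{V}(T^*).
\]
Square-integrability of $T$ ensures all these quantities are finite and justifies the decomposition.

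The main obstacle is promoting the inequality to a strict one. Since $T$ is asymmetric, there exist $(x_1,\ldots,x_{n_x};y_1,\ldots,y_{n_y})\in\NN^{n_x+n_y}$ and permutations $\sigma,\tau$ for which
\[
g(x_1,\ldots,x_{n_x};y_1,\ldots,y_{n_y})\ne g\bigl(x_{\sigma(1)},\ldots,x_{\sigma(n_x)};y_{\tau(1)},\ldots,y_{\tau(n_y)}\bigr).
\]
I would then choose $\Prob_x,\Prob_y\in\D$ that assign strictly positive mass to every color appearing in the selected tuples (for instance, uniform distributions over the relevant finite color sets). Under this choice, the order-statistic value $(x_{(1)},\ldots,x_{(n_x)};y_{(1)},\ldots,y_{(n_y)})$ is attained with positive probability, and conditional on it the values of $T$ over the $n_x!\,n_y!$ equally likely permutations are not all equal, so $\mathbb{V}(T\mid\text{order statistics})>0$ on a set of positive probability. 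Therefore $\mathbb{E}[\mathbb{V}(T\mid\text{order statistics})]>0$ for this pair of distributions, which yields $\mathbb{V}(T)>\mathbb{V}(T^*)$ and proves the claim. The subtle point throughout is ensuring that the sufficiency-based Rao--Blackwell reduction is valid in this two-sample setting with the full family $\D\times\D$, which is handled by the explicit exchangeability of each i.i.d.\ block.
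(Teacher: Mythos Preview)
Your proof is correct and follows essentially the same Rao--Blackwell approach as the paper: both define the symmetrized estimator as the conditional expectation of $T$ given the order statistics $(X_{(1\ldots n_x)},Y_{(1\ldots n_y)})$ and then compare second moments, the paper via Jensen's inequality for conditional expectations and you via the equivalent law-of-total-variance decomposition. Your handling of the strict inequality---exhibiting a pair $(\Prob_x,\Prob_y)\in\D\times\D$ that puts positive mass on a tuple where the asymmetry is witnessed---is more explicit than the paper's, which simply asserts that equality in Jensen holds if and only if $T$ is $(n_x,n_y)$-symmetric.
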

\begin{proof}
Let $\F$ denote the sigma-field generated by the random vector $(X_{(1\ldots n_x)};Y_{(1\ldots n_y)})$ and suppose that the statistic $T=f(X_1,\ldots,X_{n_x},Y_1,\ldots,Y_{n_y})$ is unbiased for $\theta(k)$ and square-integrable. In particular, $U=\Esp[T\!\mid\!\F]$ is a well-defined statistic and there is an $(n_x,n_y)$-symmetric function $g:\NN^{n_x+n_y}\to\RR$ such that $U=g(X_1,\ldots,X_{n_x};Y_1,\ldots,Y_{n_y})$. Clearly, $U$ is unbiased for $\theta(k)$ and $(n_x,n_y)$-symmetric. Since $\Esp(T^2)<+\infty$, Jensen's inequality for conditional expectations~\cite{Durrett} implies that $\Esp(U^2)\le\Esp(T^2)$, with equality if and only if $T$ is $(n_x,n_y)$-symmetric.
\end{proof}

Since $\hat\theta(k)$ is $(n_x,n_y)$-symmetric and bounded, the above lemma implies that if an UMVUE for $\theta(k)$ exists then it must be $(n_x,n_y)$-symmetric. Next, we show that there is a unique symmetric and unbiased estimator of $\theta(k)$, which immediately implies that $\hat\theta(k)$ is the UMVUE.

In what follows, $k_1,k_2\ge0$ denote integers. We say that a polynomial $Q(u_1,\ldots,u_m;v_1,\ldots,v_n)$ is \emph{$(k_1,k_2)$-homogeneous} when it is a linear combination of polynomials of the form $\prod_{i=1}^mu_i^{m_i}\prod_{j=1}^nv_j^{n_j}$, with $\sum_{i=1}^mm_i=k_1$ and $\sum_{j=1}^nn_j=k_2$. Furthermore, we say that $Q$ satisfies the \emph{partial vanishing condition} if $Q(u_1,\ldots,u_m;v_1,\ldots,v_n)=0$ whenever $u_1,\ldots,u_m,v_1,\ldots,v_n\ge0$, $\sum_{i=1}^mu_i=1$ and $\sum_{i=1}^nv_i=1$.

The next lemma is an intermediate step to show that a $(k_1,k_2)$-homogeneous polynomial which satisfies the partial vanishing condition is the zero polynomial, which is shown in Lemma~\ref{Lem:Q2=0}.

\begin{lem}
\label{Lem:Q=0 pos}
If $Q$ is a $(k_1,k_2)$-homogeneous polynomial in the real variables $u_1,\ldots,u_m,v_1,\ldots,v_n$, with $m,n\ge1$, that satisfies the partial vanishing condition, then $Q(u_1,\ldots,u_m;v_1,\ldots,v_n)=0$ whenever $u_1,\ldots,u_m,v_1,\ldots,v_n\ge0$, $\sum_{i=1}^mu_i>0$ and $\sum_{i=1}^nv_i>0$.
\end{lem}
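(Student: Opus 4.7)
The plan is to exploit the separate homogeneity of $Q$ in the $u$-block and the $v$-block of variables to reduce an arbitrary nonnegative point with positive marginal sums to a point on the product of probability simplices, where the partial vanishing condition directly applies.

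First I would record the scaling identity implied by $(k_1,k_2)$-homogeneity: since $Q$ is a linear combination of monomials $\prod_i u_i^{m_i}\prod_j v_j^{n_j}$ with $\sum_i m_i=k_1$ and $\sum_j n_j=k_2$, for any $\lambda,\mu\in\RR$ we have
\[Q(\lambda u_1,\ldots,\lambda u_m;\mu v_1,\ldots,\mu v_n)=\lambda^{k_1}\mu^{k_2}\,Q(u_1,\ldots,u_m;v_1,\ldots,v_n).\]
Next, fix arbitrary nonnegative reals $u_1,\ldots,u_m,v_1,\ldots,v_n\ge0$ with $S_u:=\sum_{i=1}^m u_i>0$ and $S_v:=\sum_{j=1}^n v_j>0$. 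Define the normalized quantities $u_i':=u_i/S_u$ and $v_j':=v_j/S_v$. Then $u_i',v_j'\ge0$, $\sum_i u_i'=1$ and $\sum_j v_j'=1$, so the partial vanishing condition yields $Q(u_1',\ldots,u_m';v_1',\ldots,v_n')=0$.

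Finally, applying the scaling identity with $\lambda=S_u$ and $\mu=S_v$ to the normalized point gives
\[Q(u_1,\ldots,u_m;v_1,\ldots,v_n)=S_u^{k_1}\,S_v^{k_2}\,Q(u_1',\ldots,u_m';v_1',\ldots,v_n')=0,\]
which is precisely the conclusion of the lemma. There is essentially no technical obstacle here: the only mild subtlety is that the homogeneity must be applied in the two blocks simultaneously but with independent scaling factors, which is exactly what $(k_1,k_2)$-homogeneity (as opposed to ordinary total-degree homogeneity) guarantees. The hypothesis $S_u>0$ and $S_v>0$ is needed solely to make the normalization legitimate.
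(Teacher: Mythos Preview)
Your proof is correct and follows essentially the same approach as the paper: normalize by the positive marginal sums to land on the product of simplices, invoke the partial vanishing condition there, and use $(k_1,k_2)$-homogeneity to pull the zero back to the original point. The paper simply compresses the scaling identity and normalization into a single displayed equation, but the logic is identical.
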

\begin{proof}
Fix $u_1,\ldots,u_m,v_1,\ldots,v_n\ge0$ such that $\sum_{i=1}^mu_i>0$ and $\sum_{i=1}^nv_i>0$ and observe that
\[Q(u_1,\ldots,u_m;v_1,\ldots,v_n):=\left(\sum_{i=1}^mu_i\right)^{k_1}\left(\sum_{i=1}^nv_i\right)^{k_2} Q\left(\frac{u_1}{\sum_{i=1}^mu_i},\ldots,\frac{u_m}{\sum_{i=1}^mu_i};\frac{v_1}{\sum_{i=1}^nv_i},\ldots,\frac{v_n}{\sum_{i=1}^nv_i}\right),\]
because $Q$ is a $(k_1,k_2)$-homogeneous polynomial. Notice now that the right hand-side above is zero because $Q$ satisfies the partial vanishing condition.
\end{proof}

\begin{lem}
\label{Lem:Q2=0}
Let $Q$ be a $(k_1,k_2)$-homogeneous polynomial in the real variables $u_1,\ldots,u_m,v_1,\ldots,v_n$, with $m,n\ge1$. If $Q$ satisfies the partial vanishing condition then $Q=0$ identically.
\end{lem}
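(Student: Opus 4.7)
The plan is to deduce $Q\equiv 0$ from Lemma~\ref{Lem:Q=0 pos} by a standard analytic-continuation argument for polynomials. Lemma~\ref{Lem:Q=0 pos} tells us that $Q(u_1,\ldots,u_m;v_1,\ldots,v_n)=0$ whenever the $u_i$ and $v_j$ are nonnegative with $\sum_i u_i > 0$ and $\sum_j v_j > 0$. In particular, $Q$ vanishes on the strictly positive open orthant
\[
\mathcal{O}:=\{(u_1,\ldots,u_m,v_1,\ldots,v_n)\in\RR^{m+n} : u_i>0 \text{ for all } i,\ v_j>0 \text{ for all } j\},
\]
which is a nonempty open subset of $\RR^{m+n}$.

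The second (and essentially only) step is to invoke the standard identity principle: a polynomial in finitely many real variables that vanishes on a nonempty open subset of $\RR^{m+n}$ is the zero polynomial. I would give a brief one-line justification: iterating the single-variable fact (a polynomial in one real variable with infinitely many roots is zero), freeze $v_1,\ldots,v_n$ and $u_2,\ldots,u_m$ inside $\mathcal{O}$ and view $Q$ as a polynomial in $u_1$ with infinitely many roots, so all its coefficients (which are polynomials in the remaining variables) vanish on an open set in $\RR^{m+n-1}$; proceed by induction on the number of variables. Hence $Q$ is identically zero as a polynomial, and in particular vanishes on all of $\RR^{m+n}$.

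There is no real obstacle here: the homogeneity and partial vanishing conditions were used in Lemma~\ref{Lem:Q=0 pos} to scale arbitrary nonnegative arguments onto the product of simplices, and once we have vanishing on an open set the conclusion is immediate from the polynomial identity theorem. The only thing to be slightly careful about is that the hypothesis $m,n\ge 1$ is what allows $\mathcal{O}$ to be nonempty and open in the ambient space, which justifies applying the identity theorem.
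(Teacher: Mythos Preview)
Your proposal is correct and follows essentially the same route as the paper: both use Lemma~\ref{Lem:Q=0 pos} to obtain vanishing on the open positive orthant and then conclude via the polynomial identity principle. The only cosmetic difference is that the paper carries out the induction on the number of variables explicitly (expanding in powers of the last variable and applying the inductive hypothesis to each coefficient), whereas you invoke the identity theorem as a known fact with a one-line sketch of the same inductive proof.
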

\begin{proof}
We prove the lemma using structural induction on $(m,n)$ for all $k_1,k_2\ge0$.

If $m=n=1$ then a $(k_1,k_2)$-homogeneous polynomial $Q(u_1,v_1)$ must be of the form $cu^{k_1}_1v^{k_2}_1$, for an appropriate constant $c$. As such a polynomial satisfies the partial-vanishing condition only when $c=0$, the base case for induction is established.

Next, consider a $(k_1,k_2)$-homogeneous polynomial $Q(u_1,\ldots,u_m;v_1,\ldots,v_n,v_{n+1})$, with $m,n\ge1$, that satisfies the partial vanishing condition, and let $d$ denote its degree with respect to the variable $v_{n+1}$. In particular, there are polynomials $Q_0,\ldots,Q_d$ in the variables $u_1,\ldots,u_m,v_1,\ldots,v_n$ such that
\[Q(u_1,\ldots,u_m;v_1,\ldots,v_n,v_{n+1})=\sum_{i=0}^d Q_i(u_1,\ldots,u_m;v_1,\ldots,v_n) v_{n+1}^i.\]
Now fix $u_1,\ldots,u_m,v_1,\ldots,v_n\ge0$ such that $\sum_{i=1}^mu_i>0$ and $\sum_{i=1}^nv_i>0$. Because $Q$ satisfies the partial vanishing condition, Lemma~\ref{Lem:Q=0 pos} implies that $\sum_{i=0}^d Q_i(u_1,\ldots,u_m;v_1,\ldots,v_n) v_{n+1}^i=0$ for all $v_{n+1}>0$. In particular, for each $i$, $Q_i(u_1,\ldots,u_m;v_1,\ldots,v_n)=0$ whenever $u_1,\ldots,u_m,v_1,\ldots,v_n\ge0$, $\sum_{i=1}^mu_i>0$ and $\sum_{i=1}^nv_i>0$. Thus each $Q_i$ satisfies the partial vanishing condition. Since $Q_i$ is a $(k_1,k_2-i)$-homogeneous polynomial, the inductive hypothesis implies that $Q_i=0$ identically and hence $Q=0$ identically. The same argument shows that if $Q(u_1,\ldots,u_m,u_{m+1};v_1,\ldots,v_n)$, with $m,n\ge1$, is a $(k_1,k_2)$-homogeneous polynomial that satisfies the partial vanishing condition then $Q=0$ identically, completing the inductive proof of the lemma.
\end{proof}

Our final result{before proving Theorem~\ref{Thm:UMVUE} implies that $\theta(k)$ cannot admit more than one symmetric and unbiased estimator. Its proof depends on the variety of distributions in $\D$, and uses the requirement that our estimator must be unbiased for any pair of distributions chosen from $\D$.

\begin{lem}
\label{Lem:f=0}
If $f$ is an $(n_x,n_y)$-symmetric function such that $\Esp[f(X_1,\ldots,X_{n_x},Y_1,\ldots,Y_{n_y})]=0$, for all $\Prob_x,\Prob_y \in \D$, then $f=0$ identically.
\end{lem}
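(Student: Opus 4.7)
The plan is to convert the vanishing expectation hypothesis into a polynomial identity in the probability masses, and then apply Lemma~\ref{Lem:Q2=0} to deduce coefficient-wise vanishing, which in turn yields pointwise vanishing of $f$ via $(n_x,n_y)$-symmetry.

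First, I would fix an arbitrary point $(a_1,\ldots,a_{n_x};b_1,\ldots,b_{n_y})\in\NN^{n_x+n_y}$ at which I wish to show $f$ vanishes, and choose an integer $N$ exceeding all its coordinates. I would then restrict attention to the parametric family of pairs $(\Prob_x,\Prob_y)\in\D\times\D$ with both distributions supported on $\{1,\ldots,N\}$, setting $u_i:=\Prob_x(i)$ and $v_j:=\Prob_y(j)$. Writing out the expectation explicitly gives
\begin{equation*}
\Esp[f(X_1,\ldots,X_{n_x};Y_1,\ldots,Y_{n_y})]=\sum_{c\in\{1,\ldots,N\}^{n_x}}\sum_{d\in\{1,\ldots,N\}^{n_y}} f(c;d)\prod_{i=1}^{n_x}u_{c_i}\prod_{j=1}^{n_y}v_{d_j},
\end{equation*}
whose right-hand side defines a polynomial $Q(u_1,\ldots,u_N;v_1,\ldots,v_N)$ that is manifestly $(n_x,n_y)$-homogeneous. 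The hypothesis that the expectation is zero for every pair of distributions in $\D$ translates directly into the fact that $Q$ vanishes whenever the $u_i$ and $v_j$ are nonnegative and sum to one; that is, $Q$ satisfies the partial vanishing condition.

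By Lemma~\ref{Lem:Q2=0}, $Q$ is identically zero, so every coefficient of $Q$ vanishes. The key observation is that the coefficient of a monomial $u_1^{\alpha_1}\cdots u_N^{\alpha_N}v_1^{\beta_1}\cdots v_N^{\beta_N}$, with $\sum_i\alpha_i=n_x$ and $\sum_j\beta_j=n_y$, equals the sum of $f(c;d)$ over all sequences $(c;d)$ whose coordinate counts realize $(\alpha,\beta)$. Because $f$ is $(n_x,n_y)$-symmetric, all such values of $f$ coincide, so the coefficient is a positive multinomial multiple of this common value; its vanishing forces $f$ to be zero on every input with entries in $\{1,\ldots,N\}$. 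In particular, $f$ vanishes at the chosen point, and since $(a_1,\ldots,a_{n_x};b_1,\ldots,b_{n_y})$ was arbitrary, $f=0$ identically.

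The main obstacle of the argument has essentially been absorbed into Lemma~\ref{Lem:Q2=0}; what remains is to make a clean identification of the polynomial $Q$ generated by the expectation with the abstract polynomials treated there. The only step requiring care is the coefficient extraction, which hinges crucially on the assumed $(n_x,n_y)$-symmetry of $f$ to collapse a sum of values of $f$ into a single value times a positive multinomial factor; without this symmetry, vanishing of the coefficient would not pin down $f$ pointwise.
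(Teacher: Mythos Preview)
Your proposal is correct and follows essentially the same approach as the paper: fix a point, express the expectation over finitely supported distributions as an $(n_x,n_y)$-homogeneous polynomial satisfying the partial vanishing condition, invoke Lemma~\ref{Lem:Q2=0}, and then use $(n_x,n_y)$-symmetry to read off $f$ at the chosen point as a multinomial multiple of a single coefficient. The only cosmetic difference is that the paper takes the support to be exactly the distinct values appearing in the fixed point (yielding $m_1$ and $m_2$ variables on each side), whereas you take the common support $\{1,\ldots,N\}$; both choices work equally well.
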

\begin{proof}
Consider a point $\vec{z}=(x_1,\ldots,x_{n_x},y_1,\ldots,y_{n_y})\in\mathbb{N}^{n_x+n_y}$ and define $m_1$ and $m_2$ as the cardinalities of the sets $\{x_1,\ldots,x_{n_x}\}$ and $\{y_1,\ldots,y_{n_y}\}$, respectively. Furthermore, let $x^{\prime}_1,\ldots,x^{\prime}_{m_1}$ denote the distinct elements in the set $\{x_1,\ldots,x_{n_x}\}$ and define $m_{1,i}$ to be the number of times that $x^{\prime}_i$ appears in this set. Furthermore, let $\Prob_x\in\D$ be a probability distribution such that $\Prob_x(\{x_1',\ldots,x_{m_1}'\})=1$ and define $p_{1,i}:=\Prob_x(x_i')$. In a completely analogous manner define $y^{\prime}_1,\ldots,y^{\prime}_{m_2}$, $m_{2,j}$, $\Prob_y$ and $p_{2,j}$.

Notice that $\Esp[f(\vec{Z}_m)]$ is a polynomial in the real variables $p_{1,1},\ldots,p_{1,m_1},p_{2,1},\ldots,p_{2,m_2}$ that satisfies the hypothesis of Lemma~\ref{Lem:Q2=0}; in particular, this polynomial is identically zero. However, because $f$ is $(n_x,n_y)$-symmetric, the coefficient of $\mathop{\prod}_{i=1}^{2}\mathop{\prod}_{j=1}^{m_i}p^{m_{i,j}}_{i,j}$ in $\Esp[f(\vec{Z}_m)]$ is
\[f(\vec{z})\,{n_x\choose m_{1,1};\ldots;m_{1,m_1}}\,{n_y\choose m_{2,1};\ldots;m_{2,m_2}},\]
implying that $f(\vec{z})=0$.
\end{proof}

\noindent\textbf{Proof of Theorem~\ref{Thm:UMVUE}.} From Lemma~\ref{Lem:SymUMVUE}, as we mentioned already, if the UMVUE for $\theta(k)$ exists then it must be $(n_x,n_y)$-symmetric. Suppose there are two $(n_x,n_y)$-symmetric functions such that $f(X_1,\ldots,X_{n_x};Y_1,\ldots,Y_{n_y})$ and $g(X_1,\ldots,X_{n_x};Y_1,\ldots,Y_{n_y})$ are unbiased for $\theta(k)$. Applying Lemma~\ref{Lem:f=0} to $(f-g)$ shows that $f=g$, and $\theta(k)$ admits therefore a unique symmetric and unbiased estimator. From Lemma~\ref{Lem:UStatDef}, $\hat{\theta}(k)$ is $(n_x,n_y)$-symmetric and unbiased for $\theta(k)$ hence it is the UMVUE for $\theta(k)$. From Lemma~\ref{Lem:ThetaMin}, it follows that no unbiased estimator of $\theta(k)$ exists for $n_x=0$ or $n_y<k$. \hfill$\Box$\\

Our next goal is to show Theorem~\ref{Thm:ASConv}, for which we prove first lemmas~\ref{Lem:ChooseRelation}-\ref{Lem:L2Unif}. We note that the later lemma applies in a much more general context than our treatment of dissimilarity.

\begin{lem}
\label{Lem:ChooseRelation}
If, for each $n\ge1$, $k_n\ge1$ is an integer such that $k_n^2=o(n)$ then
\begin{align}
\label{Eqn:Choose2}
\frac{{k\choose 1}{n-k\choose k-1}}{{n\choose k}}&=\frac{k^2}{n}+O\left(\frac{k^4}{n^2}\right);\\
\label{Eqn:Choose1}
\sum\limits_{j=2}^{k}\frac{{k\choose j}{n-k\choose k-j}}{{n\choose k}}&=O\left(\frac{k^4}{n^2}\right);
\end{align}
uniformly for $k=1:k_n$ as $n\to\infty$.
\end{lem}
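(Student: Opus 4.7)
The plan is to reduce both estimates to closed-form products of the shape $\prod_{i}(1-a_{i})$ with all $a_{i}$ small, so that standard one- and two-term expansions yield the claimed bounds, and then use Vandermonde's identity to transfer the first expansion to the tail sum in (\ref{Eqn:Choose1}).

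For (\ref{Eqn:Choose2}) I would first simplify the ratio directly with factorials, cancelling $(n-k)!$ and extracting $k^{2}/n$ from the product, to obtain
\[
\frac{k\binom{n-k}{k-1}}{\binom{n}{k}}=\frac{k^{2}}{n}\prod_{i=1}^{k-1}\frac{n-k-i+1}{n-i}=\frac{k^{2}}{n}\prod_{i=1}^{k-1}\left(1-\frac{k-1}{n-i}\right).
\]
Because $k_{n}^{2}=o(n)$, the largest factor satisfies $(k-1)/(n-i)\le(k_{n}-1)/(n-k_{n}+1)\to 0$ uniformly in $k=1:k_{n}$, so each $a_{i}\in[0,1]$. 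From the elementary bracket $1-\sum_{i}a_{i}\le\prod_{i}(1-a_{i})\le 1$ together with $\sum_{i=1}^{k-1}(k-1)/(n-i)\le(k-1)^{2}/(n-k+1)=O(k^{2}/n)$ uniformly, the product equals $1+O(k^{2}/n)$, and multiplying by $k^{2}/n$ yields (\ref{Eqn:Choose2}).

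For (\ref{Eqn:Choose1}) I would appeal to Vandermonde's identity $\sum_{j=0}^{k}\binom{k}{j}\binom{n-k}{k-j}=\binom{n}{k}$ to rewrite
\[
\sum_{j=2}^{k}\frac{\binom{k}{j}\binom{n-k}{k-j}}{\binom{n}{k}}=1-\frac{\binom{n-k}{k}}{\binom{n}{k}}-\frac{k\binom{n-k}{k-1}}{\binom{n}{k}}.
\]
The last term is controlled by (\ref{Eqn:Choose2}). For the middle term, the same factorial manipulation gives $\binom{n-k}{k}/\binom{n}{k}=\prod_{i=0}^{k-1}(1-k/(n-i))$, and I would apply the second-order expansion $\prod_{i}(1-a_{i})=1-\sum_{i}a_{i}+O\bigl((\sum_{i}a_{i})^{2}\bigr)$, justified by $\log(1-a_{i})=-a_{i}+O(a_{i}^{2})$ once $a_{i}\to0$ uniformly. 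Expanding $1/(n-i)=(1/n)(1+i/n+O(i^{2}/n^{2}))$ gives $\sum_{i=0}^{k-1}k/(n-i)=k^{2}/n+O(k^{3}/n^{2})$, so that $\binom{n-k}{k}/\binom{n}{k}=1-k^{2}/n+O(k^{4}/n^{2})$ uniformly. Summing the two expansions with the $1-(\cdot)-(\cdot)$ structure above makes the $1$ and the $\pm k^{2}/n$ cancel exactly, leaving the claimed $O(k^{4}/n^{2})$.

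The main obstacle is purely bookkeeping: one must verify that the error terms are uniform in $k=1:k_{n}$ and of the \emph{correct} order $k^{4}/n^{2}$ rather than a smaller order like $k^{3}/n^{2}$. The uniformity is handled once for all by $k_{n}^{2}=o(n)$, which makes the maximal $a_{i}$ tend to $0$ uniformly and legitimizes the log expansion. The order issue is resolved by the trivial inequality $k^{3}/n^{2}\le k^{4}/n^{2}$ valid for $k\ge 1$, so every lower-order error that appears along the way (harmonic-sum corrections, cross terms in the product expansion) is absorbed into $O(k^{4}/n^{2})$ without loss.
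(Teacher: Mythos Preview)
Your proof is correct and essentially identical to the paper's. Both write the binomial ratios as products $\prod_i(1-a_i)$ with $a_i$ uniformly small, control them by log-type expansions (the paper uses the bracket $-x/(1-x)\le\log(1-x)\le-x$ and exponentiates where you invoke Taylor and, for (\ref{Eqn:Choose2}), the elementary inequality $1-\sum_i a_i\le\prod_i(1-a_i)\le1$), and then appeal to Vandermonde's identity to combine the $j=0$ and $j=1$ terms so that the $k^2/n$ contributions cancel in (\ref{Eqn:Choose1}).
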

\begin{proof}
First observe that for all $n$ sufficiently large and $k=1:k_n$, it applies that
\begin{align*}
\frac{{k\choose 1}{n-k\choose k-1}}{{n\choose k}}=\frac{k^2}{n}\mathop{\prod}\limits_{i=0}^{k-2}\left(1-\frac{k-1}{n-1-i}\right)=\frac{k^2}{n}\exp\left\{\sum_{i=0}^{k-2}\log\left(1-\frac{k-1}{n-1-i}\right)\right\}.
\end{align*}
Note that $-x/(1-x)\le\log(1-x)\le-x$, for all $0\le x<1$. As a result, we may bound the exponential factor on the right-hand side above as follows:
\[e^{-(k-1)^2/(n-2k+2)}\le\exp\left\{\sum_{i=0}^{k-2}\log\left(1-\frac{k-1}{n-1-i}\right)\right\}\le e^{-(k-1)^2/(n-1)}.\]
Since $e^{-(k-1)^2/(n-2k+2)}=1+O(k^2/n)$ and $e^{-(k-1)^2/(n-1)}=1+O(k^2/n)$, uniformly for all $k=1:k_n$ as $n\to\infty$, (\ref{Eqn:Choose2}) follows.

To show (\ref{Eqn:Choose1}), first note the combinatorial identity
\begin{equation}
\label{ide:CombIdent}
\mathop{\sum}\limits_{j=0}^{k}\frac{{k\choose j}{n-k\choose k-j}}{{n\choose k}}=1.
\end{equation}
Proceeding in an analogous manner as we did to show (\ref{Eqn:Choose2}), we see now that the term associated with the index $j=0$ in the above summation satisfies that
\[e^{-k^2/(n-2k+1)}\le\frac{{k\choose 0}{n-k\choose k}}{{n\choose k}}\le e^{-k^2/n},\]
for all $n$ sufficiently large and $k=1:k_n$. Since $e^{-k^2/(n-2k+1)}=1-k^2/n+O(k^4/n^2)$ and $e^{-k^2/n}=1-k^2/n+O(k^4/n^2)$, the above inequalities together with (\ref{Eqn:Choose2}) and (\ref{ide:CombIdent}) establish (\ref{Eqn:Choose1}).
\end{proof}

\begin{lem}
\label{Lem:L2Unif}
Define $\lambda(k):=\mathbb{E}(h(X_1,Y_1,\ldots,Y_k))$, where $h(x_1,y_1,\ldots,y_k)$ is a bounded $(1,k)$-symmetric function, and let
\[\hat\lambda(k)=\hat{\lambda}_{n_x,n_y}(k):=\frac{1}{n_x|S_{k,n_y}|}\sum_{i=1}^{n_x}\sum_{\sigma\in S_{k,n_y}}h(X_i,Y_{\sigma(1)},\ldots,Y_{\sigma(k)})\]
be the U-statistic of $\lambda(k)$ associated with $n_x$ draws from urn-$x$ and $n_y$ draws from urn-$y$; in particular, $\mathbb{E}(\hat\lambda(k))=\lambda(k)$. Furthermore, assume that
\begin{enumerate}[I]
\item[(i)] $0\le h\le 1$,
\item[(ii)] there is a function $f:I_x\to[0,1]$ such that $\mathop{\lim}\limits_{k\rightarrow\infty}h(X_1,Y_1,\ldots,Y_k)\mathop{=}\limits^{\mbox{a.s}}f(X_1)$,
\item[(iii)] $\hat{\lambda}(k)\ge\hat{\lambda}(k+1)$; in particular, $\lambda(k)\ge\lambda(k+1)$.
\end{enumerate}
Under the above assumptions, it follows that
\begin{align*}
\mathop{\lim}\limits_{n_x,n_y\rightarrow\infty}\mathbb{E}\left(\mathop{\max}\limits_{k=1:n_y}\left|\hat{\lambda}(k)-\lambda(k)\right|^2\right)&= 0.
\end{align*}
\end{lem}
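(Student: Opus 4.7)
\noindent\textbf{Proof plan for Lemma~\ref{Lem:L2Unif}.} My strategy is to split the maximum over $k = 1{:}n_y$ at an intermediate threshold $k_n := \min\{\lfloor n_x^{1/2}\rfloor, \lfloor n_y^{1/4}\rfloor\}$. This choice forces $k_n \to \infty$, $k_n/n_x \to 0$, $k_n^3/n_y \to 0$, and $k_n^2 = o(n_y)$ as $n_x, n_y \to \infty$. The trivial bound
\[
\mathbb{E}\Bigl(\max_{k=1:n_y} |\hat\lambda(k) - \lambda(k)|^2\Bigr) \le \mathbb{E}\Bigl(\max_{k=1:k_n} |\hat\lambda(k) - \lambda(k)|^2\Bigr) + \mathbb{E}\Bigl(\max_{k_n < k \le n_y} |\hat\lambda(k) - \lambda(k)|^2\Bigr)
\]
then reduces the proof to treating each sub-range separately.

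For the small-$k$ regime, I would invoke Hoeffding's decomposition of the variance of a two-sample $U$-statistic with kernel degrees $(1,k)$ on sample sizes $(n_x, n_y)$. Since $0 \le h \le 1$, each projection variance is bounded by $1$, so
\[
\mathbb{V}(\hat\lambda(k)) \le \frac{1}{n_x} + \sum_{d=1}^{k} \frac{\binom{k}{d}\binom{n_y-k}{k-d}}{\binom{n_y}{k}} = \frac{1}{n_x} + \left(1 - \frac{\binom{n_y-k}{k}}{\binom{n_y}{k}}\right).
\]
By equations (\ref{Eqn:Choose2}) and (\ref{Eqn:Choose1}) together with identity (\ref{ide:CombIdent}), the parenthetical term is $O(k^2/n_y)$ uniformly for $k = 1{:}k_n$. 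A union bound and the choice of $k_n$ then give
\[
\mathbb{E}\Bigl(\max_{k=1:k_n} |\hat\lambda(k) - \lambda(k)|^2\Bigr) \le \sum_{k=1}^{k_n}\mathbb{V}(\hat\lambda(k)) = O\!\left(\frac{k_n}{n_x} + \frac{k_n^3}{n_y}\right) \to 0.
\]

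For the large-$k$ regime, I would exploit hypothesis (iii). Since $\hat\lambda(n_y) \le \hat\lambda(k) \le \hat\lambda(k_n)$ and $\lambda(n_y) \le \lambda(k) \le \lambda(k_n)$ for every $k_n < k \le n_y$, subtraction yields $\hat\lambda(n_y) - \lambda(k_n) \le \hat\lambda(k) - \lambda(k) \le \hat\lambda(k_n) - \lambda(n_y)$, so
\[
|\hat\lambda(k) - \lambda(k)|^2 \le (\hat\lambda(k_n) - \lambda(n_y))^2 + (\hat\lambda(n_y) - \lambda(k_n))^2.
\]
Applying $(a+b)^2 \le 2a^2 + 2b^2$ twice, it suffices to show $\mathbb{E}(\hat\lambda(k_n) - \lambda(k_n))^2 \to 0$, $\mathbb{E}(\hat\lambda(n_y) - \lambda(n_y))^2 \to 0$, and $|\lambda(k_n) - \lambda(n_y)| \to 0$. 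The first is handled by the previous paragraph, and the third follows from hypothesis (ii) together with bounded convergence, which forces $\lambda(k) \to \mathbb{E}[f(X_1)]$ as $k \to \infty$.

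The main obstacle is the $k = n_y$ term, where Hoeffding's formula produces no vanishing variance because the kernel uses all the $Y$-sample. My plan is to write $\hat\lambda(n_y) - \lambda(n_y) = \frac{1}{n_x}\sum_{i=1}^{n_x} W_i$ with $W_i := h(X_i; Y_1, \ldots, Y_{n_y}) - \lambda(n_y)$, and to expand
\[
\mathbb{E}(\hat\lambda(n_y) - \lambda(n_y))^2 = \frac{\mathbb{E}[W_1^2]}{n_x} + \frac{n_x - 1}{n_x}\,\mathbb{E}[W_1 W_2].
\]
The first term is $O(1/n_x)$ since $|W_i| \le 1$. For the cross term, although the $W_i$'s share the same $Y$-sample and are therefore not independent, their dependence \emph{vanishes in the limit} by hypothesis (ii): bounded convergence gives $\mathbb{E}[h(X_1; Y_1, \ldots, Y_{n_y})\, h(X_2; Y_1, \ldots, Y_{n_y})] \to \mathbb{E}[f(X_1) f(X_2)] = (\mathbb{E}[f(X_1)])^2$ by independence of $X_1$ and $X_2$, while simultaneously $\lambda(n_y)^2 \to (\mathbb{E}[f(X_1)])^2$, so $\mathbb{E}[W_1 W_2] \to 0$. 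Combining the three estimates completes the proof.
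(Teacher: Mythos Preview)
Your proof is correct and follows essentially the same strategy as the paper: split at a threshold $k_n$, control the small-$k$ range by summing Hoeffding variance bounds via Lemma~\ref{Lem:ChooseRelation}, and control the large-$k$ range by the monotonicity sandwich together with $\mathbb{V}(\hat\lambda(k_n)),\,\mathbb{V}(\hat\lambda(n_y)),\,|\lambda(k_n)-\lambda(n_y)|\to 0$. The only cosmetic differences are your choice $k_n=\min\{\lfloor n_x^{1/2}\rfloor,\lfloor n_y^{1/4}\rfloor\}$ versus the paper's $1+\min\{\lfloor n_x^{1/2}\rfloor,\lfloor\log n_y\rfloor\}$, and your direct product computation $\mathbb{E}[W_1W_2]\to 0$ for the $k=n_y$ variance, which is exactly the paper's quantity $\xi_{0,n_y}(n_y)$ handled there via the general bound~(\ref{ineq:Var20})--(\ref{ineq:Var21}).
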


\begin{proof}
Define $k_n:=1+\min\big\{\lfloor n_x^{1/2}\rfloor,\lfloor \log(n_y)\rfloor\big\}$; in particular, $1\le k_n\le n_y$ and $k_n\to\infty$, $k_n=o(n_x)$ and $k_n^p=o(n_y)$, for any $p>0$, as $n_x,n_y\to\infty$. The proof of the theorem is reduced to show that
\begin{align}
\label{Eqn:ConsL2Large}
\mathop{\lim}\limits_{n_x,n_y\rightarrow\infty}\mathbb{E}\left(\mathop{\max}\limits_{k=k_n:n_y}|\hat{\lambda}(k)-\lambda(k)|^2\right)&=0;\\
\label{Eqn:ConsL2Small}
\mathop{\lim}\limits_{n_x,n_y\rightarrow\infty}\mathbb{E}\left(\mathop{\max}\limits_{k=1:k_n}|\hat{\lambda}(k)-\lambda(k)|^2\right)&=0.
\end{align}

Next, we compute the variance of $\hat\lambda(k)$ following an approach similar to Hoeffding~\cite{Hoeffding}. Because $h$ is $(1,k)$-symmetric, a tedious yet standard calculation shows that
\begin{align}
\label{Eqn:LambdaVar}
\mathbb{V}\big(\hat{\lambda}(k)\big)&=\frac{\mathop{\sum}\limits_{j=0}^{k}{k\choose j}{n_y-k\choose k-j}((n_x-1)\xi_{0,j}(k) + \xi_{1,j}(k))}{n_x{n_y\choose k}},
\end{align}
where
\begin{eqnarray}
\label{def:xi0jkGeneral} \xi_{0,j}(k)&:=&\mathbb{V}\big(\mathbb{E}(h(X_1,Y_1,\ldots,Y_k)|Y_1,\ldots,Y_j)\big);\\
\label{def:xi1jkGeneral} \xi_{1,j}(k)&:=&\mathbb{V}\big(\mathbb{E}(h(X_1,Y_1,\ldots,Y_k)|X_1,Y_1,\ldots,Y_j)\big).
\end{eqnarray}

Clearly, $\xi_{1,j}(k)\le 1$. On the other hand, if $W$ is any random variable with finite expectation and $\mathcal{F}_1\subset\mathcal{F}_2$ are sigma-fields then $\mathbb{V}(\Esp(W|\mathcal{F}_1))\le\mathbb{V}(\Esp(W|\mathcal{F}_2))$, due to well-known properties of conditional expectations~\cite{Durrett}. In particular, for each $0\le j\le k$, we have that
\begin{equation}
\label{ide:sosouseful}
\xi_{0,j}(k)\le\xi_{0,k}(k),\hbox{ and }\xi_{1,j}(k)\le\xi_{1,k}(k).
\end{equation}
Consequently,~(\ref{Eqn:LambdaVar}) implies that
\begin{equation}
\label{ineq:Var20}
\mathbb{V}\big(\hat{\lambda}(k)\big)\le\xi_{0,k}(k)+\frac{1}{n_x}.
\end{equation}

We claim that
\begin{equation}
\label{ineq:Var21}
\lim_{k\to\infty}\xi_{0,k}(k)=0.
\end{equation}
Indeed, using an argument similar as above, we find that
\begin{eqnarray*}
\xi_{0,k}(k)&=&\mathbb{V}\Big(\Esp\big(h(X_1,Y_1,\ldots,Y_k)-f(X_1)\mid Y_1,\ldots,Y_k\big)\Big),\\
&\le&\mathbb{V}\Big(\Esp\big(h(X_1,Y_1,\ldots,Y_k)-f(X_1)\mid Y_1,\ldots,Y_k,X_1\big)\Big),\\
&=&\mathbb{V}\big(h(X_1,Y_1,\ldots,Y_k)-f(X_1)\big).
\end{eqnarray*}
Due to assumptions (\textit{i})-(\textit{ii}) and the Bounded Convergence Theorem, the right-hand side above tends to $0$, and the claim follows.

It follows from (\ref{ineq:Var20}) and (\ref{ineq:Var21}) that
\[\lim_{k\to\infty}\mathbb{V}\big(\hat\lambda(k)\big)=0.\]
Finally, because of assumption (\textit{iii}),
\begin{align*}
\mathbb{E}\left(\mathop{\max}\limits_{k=k_n:n_y}|\hat{\lambda}(k)-\lambda(k)|^2\right)\le 2|{\lambda}(k_n)-\lambda(n_y)|^2+2\mathbb{V}(\hat\lambda(k_n))+2\mathbb{V}(\hat\lambda(n_y)).
\end{align*}
Since each term on the right-hand side above tends to zero as $n_x,n_y\rightarrow\infty$, (\ref{Eqn:ConsL2Large}) follows.

We now show (\ref{Eqn:ConsL2Small}). As $\xi_{i,j}(k)\le1$ and $\xi_{0,0}(k)=0$, it follows by (\ref{Eqn:LambdaVar}) and Lemma~\ref{Lem:ChooseRelation} that
\begin{align*}
\mathbb{V}\left(\hat{\lambda}(k)\right)&\le1-\frac{{n_y-k\choose k}}{{n_y\choose k}}+\frac{1}{n_x}=\frac{1}{n_x}+\sum_{j=1}^k\frac{{k\choose j}{n_y-k\choose k-j}}{{n_y\choose k}}=\frac{1}{n_x}+\frac{k^2}{n_y} + O\left(\frac{k^4}{n_y^2}\right),
\end{align*}
uniformly for $k=1:k_n$ as $n_x,n_y\to\infty$. In particular,
\begin{align*}
\mathbb{E}\left(\mathop{\max}\limits_{k=1:k_n}|\hat{\lambda}(k)-\lambda(k)|^2\right)&\le\mathop{\sum}\limits_{k=1}^{k_n}\mathbb{V}(\hat{\lambda}(k))\le\frac{k_n}{n_x}+\frac{k_n^3}{n_y} + O\left(\frac{k_n^5}{n_y^2}\right).
\end{align*}
Due to the definition of the coefficients $k_n$, the right-hand side above tends to zero, and (\ref{Eqn:ConsL2Small}) follows.
\end{proof}

\noindent\textbf{Proof of Theorem~\ref{Thm:ASConv}.} Note that $\theta(k)\!=\!\mathbb{E}(h(X_1,Y_1,\ldots,Y_k))$, with $h(x_1,y_1,\ldots,y_k)\!:=\![\![x_1\!\notin\!\{y_1,\ldots,y_k\}]\!]$. We show that the kernel function $h$ and the U-statistics $\hat{\theta}(k)$ satisfy the hypotheses of Lemma~\ref{Lem:L2Unif}. From this the theorem is immediate because $\mathcal{L}^2$-convergence implies convergence in probability.

Clearly $h$ is $(1,k)$-symmetric and $0\le h\le1$, which shows assumption (i) in Lemma~\ref{Lem:L2Unif}. On the other hand, due to the Law of Large Numbers, $\lim_{k\to\infty}h(X_1,Y_1,\ldots,Y_k)=[\![X_1\notin I_y]\!]$ almost surely, from which assumption (ii) in the lemma also follows.

Finally, to show assumption (iii), recall that $S_{k,n}$ is the set of one-to-one functions from $\{1,\ldots,k\}$ into $\{1,\ldots,n\}$; in particular, $|S_{k+1,n_y}|=(n_y-k)\cdot|S_{k,n_y}|$. Now note that for each indicator of the form $[\![X_1\notin\{Y_{\sigma(1)},\ldots,Y_{\sigma(k)}\}]\!]$, with $\sigma\in S_{k+1,n_y}$, there are $(n_y-k)$ choices of $\sigma(k+1)$  outside the set $\{\sigma(1),\ldots,\sigma(k)\}$. Because $[\![X_1\notin\{Y_{\sigma(1)},\ldots,Y_{\sigma(k)}\}]\!]\ge[\![X_1\notin\{Y_{\sigma(1)},\ldots,Y_{\sigma(k+1)}\}]\!]$, it follows that $\hat{\theta}(k)\ge\hat{\theta}(k+1)$ for all $k=1:(n_y-1)$. This shows condition (iii) in Lemma~\ref{Lem:L2Unif}, and Theorem~\ref{Thm:ASConv} follows.\hfill$\Box$\\

\noindent\textbf{Proof of equation~(\ref{Eqn:Vx}).} The jackknife estimate of the variance of $\hat\theta(k)$ obtained from removing a single $x$-data is, by definition, the quantity
\begin{align}
\label{Eqn:Vxu}
S^2_x(k) &:= \frac{n_x-1}{n_x}\sum_{i=1}^{n_x}\left(\frac{1}{(n_x-1)|S_{k,n_y}|}\sum\limits_{j\ne i}\sum\limits_{\sigma\in S_{k,n_y}}[\![X_j\notin\{Y_{\sigma(1)},\ldots,Y_{\sigma(k)}\}]\!]-\hat{\theta}(k)\right)^2.
\end{align}

Note that removing a color from the $x$-data which would otherwise add to $Q(j)$, decrements this quantity by one unit. Let $Q_i$ denote the $Q$-statistics associated with the data when observation $X_i$ from urn-$x$ is removed from the sample. Note that as each draw from urn-$x$ contributes to exactly one $Q(j)$, $Q_i(j) = Q(j)$ for all $j$ except for some $j_i^{\star}$ where $Q_i(j_i^{\star}) = Q(j_i^{\star})-1$. We have therefore that
\begin{align*}
S^2_x(k) &=\frac{n_x-1}{n_x}\sum_{i=1}^{n_x}\left(\sum\limits_{j=0}^{n_y-k}\frac{{n_y-j\choose k}Q_i(j)}{{(n_x-1)|S_{k,n_y}|}}-\sum\limits_{j=0}^{n_y-k}\frac{{n_y-j\choose k}Q(j)}{n_x|S_{k,n_y}|}\right)^2,\\
&=\frac{n_x-1}{n_x}\sum_{i=1}^{n_x}\left(\sum\limits_{j=0}^{n_y-k}\frac{{n_y-j\choose k}Q(j)}{n_x(n_x-1)|S_{k,n_y}|}-\frac{{n_y-j_i^{\star}\choose k}}{|S_{k,n_y}|(n_x-1)}\right)^2,\\
&=\frac{1}{n_x(n_x-1)}\sum_{i=1}^{n_x}\left(\hat{\theta}(k)-\frac{{n_y-j_i^{\star}\choose k}}{|S_{k,n_y}|}\right)^2.
\end{align*}
Since there are $Q(j)$ draws from urn-$x$ which contribute to $Q(j)$, the above sum may be now rewritten in the form given in (\ref{Eqn:Vx}).\hfill$\Box$\\

\noindent\textbf{Proof of equation~(\ref{Eqn:Vy}).} Similarly, $S^2_y(k)$ corresponds to the jackknife summed over each possible deletion of {a single $y$-data, which is more precisely given by
\begin{align}
\label{Eqn:Vyu}
S^2_y(k) &= \frac{n_y-1}{n_y}\sum_{r=1}^{n_y}\left(\frac{1}{n_x|S_r|}\sum\limits_{i=1}^{n_x}\sum\limits_{\sigma\in S_{r}}[\![X_i\notin\{Y_{\sigma(1)},\ldots,Y_{\sigma(k)}\}]\!]-\hat{\theta}(k)\right)^2,
\end{align}
where $S_{r}$ is the set of one-to-one functions from $\{1,\ldots,k\}$ into $\{1,\ldots,n_y\}\setminus\{r\}$.

Recall that $M(i,j)$ is the number of colors seen $i$ times in draws from urn-$x$ and $j$ times in draws from urn-$y$, giving that $\mathop{\sum}_iiM(i,j) = Q(j)$.

Fix $1\le r\le n_y$ and suppose that $Y_r$ is of a  color that contributes to $M(i_r^{\star},j_r^{\star})$, for some $i_r^{\star},j_r^{\star}.$ Removing $Y_r$ from the data decrements $M(i_r^{\star},j_r^{\star})$ and increments $M(i_r^{\star},j_r^{\star}-1)$ by one unit. Proceeding similarly as in the case for $S^2_x(k)$, if $M_r$ is used to denote the $M$-statistics when observation $Y_r$ is removed from sample-$y$, then
\begin{align*}
S^2_y(k)&=\frac{n_y-1}{n_y}\sum_{r=1}^{n_y}\left(\sum\limits_{j=0}^{n_y-k-1}\frac{{n_y-j-1\choose k}}{n_x|S_r|}\sum\limits_{i=1}^{n_x}iM_r(i,j)-\hat\theta(k)\right)^2,\\
&=\frac{n_y-1}{n_y}\sum_{r=1}^{n_y}\left(i_r^{\star}\frac{{n_y-j_r^{\star}\choose k}}{n_x|S_r|} -i_r^{\star}\frac{{n_y-j_r^{\star}-1\choose k}}{n_x|S_r|}+\sum_{j=0}^{n_y-k-1}\frac{{n_y-j_r^{\star}-1\choose k}}{n_x|S_r|}Q(j) -\hat{\theta}(k)\right)^2,\\
&=\frac{n_y-1}{n_y}\sum_{r=1}^{n_y}\left(i_r^{\star}\left(c_{j_r^{\star}-1}(k) - c_{j_r^{\star}}(k)\right)+\hat{\theta}_y(k) -\hat{\theta}(k)\right)^2,
\end{align*}
where $c_j(k)$ and $\hat{\theta}_y(k)$ are as defined in (\ref{Eqn:CjDef}) and (\ref{Eqn:ThetaYDef}). Noting that for each $i$ there are $j$ draws from urn-$y$ that contribute to $M(i,j)$, the form in (\ref{Eqn:Vy}) follows.\hfill$\Box$\\

In what follows, we specialize the coefficients in~(\ref{def:xi0jkGeneral}) and~(\ref{def:xi1jkGeneral}) to the kernel function of dissimilarity, $h(x_1,y_1,\ldots,y_k):=[\![x_1\notin\{y_1,\ldots,y_k]\!]$. From now on, for each $j\ge0$ and $k\ge1$, define
\begin{align}
\label{Eqn:Xi0j}
\xi_{0,j}(k)&:=\mathbb{V}(\Prob(X_1\notin\{Y_1,\ldots,Y_k\}|Y_1,\ldots,Y_j));\\
\label{Eqn:Xi1j}
\xi_{1,j}(k)&:=\mathbb{V}(\Prob(X_1\notin\{Y_1,\ldots,Y_k\}|X_1,Y_1,\ldots,Y_j)).
\end{align}
Above it is understood that the sigma-field generated by $(Y_1,\ldots,Y_j)$ when $j=0$ is $\{\emptyset,\Omega\}$; in particular, $\xi_{0,0}(k)=0$, for all $k\ge1$.

The following asymptotic properties of $\xi_{i,j}(k)$ are useful in the remaining proofs.

\begin{lem}
\label{Lem:Xi}
Assume that conditions (a)-(c) are satisfied and define $c:=\min_{i\in(I_x\cap I_y)}\Prob_y(i)$. It follows that $0<c<1$ and $0<\theta(\infty)<1$. Furthermore
\begin{align}
\label{Eqn:Xi2}
\xi_{1,k}(k)-\xi_{1,0}(k)&=\Theta\big((1-c)^k\big);\\
\label{Eqn:XiC2}
\xi_{1,0}(k) &= \theta(\infty)\,\big(1-\theta(\infty)\big)+\Theta\big((1-c)^k\big);\\
\label{Eqn:Xi1}
\xi_{0,k}(k) &= O\big((1-c)^k\big).
\end{align}
\end{lem}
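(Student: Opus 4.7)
\textbf{Proof plan for Lemma~\ref{Lem:Xi}.} The plan is to reduce each of the three estimates to elementary manipulations of the identity $\Prob(X_1 \notin \{Y_1,\ldots,Y_k\} \mid X_1)=(1-\Prob_y(X_1))^k$, together with the decomposition $\theta(k)=\theta(\infty)+\delta_k$ where $\delta_k:=\sum_{i\in I_x\cap I_y}\Prob_x(i)(1-\Prob_y(i))^k$. First I would handle the preamble: condition (a) guarantees that $c=\min_{i\in I_x\cap I_y}\Prob_y(i)$ is attained; condition (b) forces $|I_x\cap I_y|\ge 2$, hence $c\le 1/2<1$, and also $\sum_{i\in I_x\cap I_y}\Prob_x(i)>0$ so $\theta(\infty)<1$; condition (c) gives $\theta(\infty)>0$, and $c>0$ since every $i\in I_y$ has $\Prob_y(i)>0$. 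In addition, $c$ is attained at some $i_0\in I_x\cap I_y$, where necessarily $\Prob_x(i_0)>0$, so that $\Prob_x(i_0)(1-c)^k\le \delta_k\le (1-c)^k$, yielding $\delta_k=\Theta((1-c)^k)$.

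For $\xi_{1,0}(k)$ I would use $\xi_{1,0}(k)=\mathbb{E}[(1-\Prob_y(X_1))^{2k}]-\theta(k)^2$. Splitting the expectation over $I_x\setminus I_y$ and $I_x\cap I_y$ gives $\mathbb{E}[(1-\Prob_y(X_1))^{2k}]=\theta(\infty)+\eta_k$ with $\eta_k=O((1-c)^{2k})$, so
\[
\xi_{1,0}(k)=\theta(\infty)+\eta_k-(\theta(\infty)+\delta_k)^2=\theta(\infty)(1-\theta(\infty))-2\theta(\infty)\delta_k+O((1-c)^{2k}).
\]
Since $\theta(\infty)>0$ and $\delta_k=\Theta((1-c)^k)$, the middle term dominates the $O((1-c)^{2k})$ remainder and (\ref{Eqn:XiC2}) follows.

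For $\xi_{1,k}(k)$, conditioning on $(X_1,Y_1,\ldots,Y_k)$ renders the indicator deterministic, so the inner conditional probability is itself a Bernoulli with success probability $\theta(k)$ and
\[
\xi_{1,k}(k)=\theta(k)(1-\theta(k))=\theta(\infty)(1-\theta(\infty))+(1-2\theta(\infty))\delta_k-\delta_k^2.
\]
Subtracting the expansion of $\xi_{1,0}(k)$ obtained above, the $(1-2\theta(\infty))\delta_k$ and $2\theta(\infty)\delta_k$ contributions combine to give $\delta_k$ modulo terms of order $(1-c)^{2k}$, proving (\ref{Eqn:Xi2}).

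For (\ref{Eqn:Xi1}), conditioning on $(Y_1,\ldots,Y_k)$ and summing over $i\in I_x$ yields
\[
\Prob(X_1\notin\{Y_1,\ldots,Y_k\}\mid Y_1,\ldots,Y_k)=\theta(\infty)+W_k,\qquad W_k:=\sum_{i\in I_x\cap I_y}\Prob_x(i)[\![i\notin\{Y_1,\ldots,Y_k\}]\!],
\]
so $\xi_{0,k}(k)=\mathbb{V}(W_k)\le\mathbb{E}(W_k^2)$. Since $0\le W_k\le 1-\theta(\infty)\le 1$, one has $W_k^2\le W_k$, and therefore $\xi_{0,k}(k)\le\mathbb{E}(W_k)=\delta_k\le(1-c)^k$. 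The main (minor) obstacle is bookkeeping: one must keep the two-sided bound $\Theta((1-c)^k)$ as opposed to just $O$, which is exactly why condition (c) and the finiteness in (a) are both invoked to produce the nonzero leading coefficient $-2\theta(\infty)\Prob_x(i_0)(1-c)^k$ in the expansion of $\xi_{1,0}(k)$. The remaining identities are then pure algebra.
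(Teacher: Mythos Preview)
Your argument is correct. For (\ref{Eqn:Xi2}) and (\ref{Eqn:XiC2}) you are doing exactly what the paper does, just written out in slightly different notation: the paper records the closed form $\xi_{1,j}(k)=\theta(2k-j)-\theta(k)^2$, from which $\xi_{1,k}(k)-\xi_{1,0}(k)=\theta(k)-\theta(2k)$ and $\xi_{1,0}(k)=\theta(2k)-\theta(k)^2$ are read off and expanded via $\theta(m)=\theta(\infty)+\delta_m$; your expansions are the same identities with $\theta(2k)$ written as $\mathbb{E}[(1-\Prob_y(X_1))^{2k}]$.

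For (\ref{Eqn:Xi1}) your route is genuinely different and cleaner. The paper introduces the coupon-collector time $T=\min\{n:\,I_x\cap I_y\subset\{Y_1,\ldots,Y_n\}\}$, uses the union bound $\Prob(T>k)\le |I_x\cap I_y|\,(1-c)^k$ (this is where condition~(a) enters), and then splits $\xi_{0,k}(k)$ according to $[T\le k]$ versus $[T>k]$ to get $\xi_{0,k}(k)\le |I_x\cap I_y|(1-c)^k+(\theta(k)-\theta(\infty))^2$. Your bound $\xi_{0,k}(k)=\mathbb{V}(W_k)\le\mathbb{E}(W_k^2)\le\mathbb{E}(W_k)=\delta_k\le(1-c)^k$ is shorter, gives a tighter constant, and in fact does not use the finiteness in condition~(a) at all for this part; (a) is still needed, as you note, only to secure the lower bound $\delta_k\ge\Prob_x(i_0)(1-c)^k$ that feeds into the $\Theta$-statements (\ref{Eqn:Xi2}) and (\ref{Eqn:XiC2}).
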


\begin{proof}
Observe that conditions (a)-(b) imply that $0<c<1$. In addition, condition (b) implies that $\theta(\infty)<1$, whereas condition (c) implies that $\theta(\infty)>0$.

Next, consider the set
\[I^\star:=\big\{i\in(I_x\cap I_y)\hbox{ such that }\Prob_y(i)=c\big\},\]
i.e. $I^\star$ is the set of rarest colors in urn-$y$ which are also in urn-$x$.
Also note that
\begin{equation}
\label{ide:souseful}
\theta(k)=\theta(\infty)+\sum_{i\in(I_x\cap I_y)}\Prob_x(i)\big(1-\Prob_y(i)\big)^k.
\end{equation}

As an intermediate step before showing (\ref{Eqn:Xi2}), we prove that
\begin{equation}
\label{Eqn:XiB}
\xi_{1,j}(k)=\theta(2k-j)-\theta^2(k).
\end{equation}
For this, first observe that
\[\Prob(X_1\notin\{Y_1,\ldots,Y_k\}\mid X_1,Y_1,\ldots,Y_j)=[\![X_1\notin\{Y_1,\ldots,Y_j\}]\!](1-\Prob_y(X_1))^{k-j}.\]
Hence
\begin{align*}
\Esp\big(\Prob(X_1\notin\{Y_1,\ldots,Y_k\}\mid X_1,Y_1,\ldots,Y_j)^2\big)&=\Esp\big([\![X_1\notin\{Y_1,\ldots,Y_j\}]\!](1-\Prob_y(X_1))^{2k-2j}\big),\\
&=\Esp\big(\Prob(X_1\notin\{Y_1,\ldots,Y_{2k-j}\}\mid X_1,Y_1,\ldots,Y_j)\big),\\
&=\theta(2k-j),
\end{align*}
from which (\ref{Eqn:XiB}) now easily follows.

To show (\ref{Eqn:Xi2}) note that (\ref{Eqn:XiB}) implies 
\begin{align*}
\xi_{1,k}(k)-\xi_{1,0}(k)&=\theta(k)-\theta(2k),\\
&=\mathop{\sum}\limits_{i\in I_x}\Prob_x(i)(1-\Prob_y(i))^k(1-(1-\Prob_y(i))^k),\\
&=\mathop{\sum}\limits_{i\in (I_x\cap I_y)}\Prob_x(i)(1-\Prob_y(i))^k(1-(1-\Prob_y(i))^k),\\
&=(1-c)^k\mathop{\sum}\limits_{i\in I^{\star}}\Prob_x(i) + o\left((1-c)^k\right),
\end{align*}
which establishes (\ref{Eqn:Xi2}).

Now note that
\begin{eqnarray*}
\xi_{1,0}(k)&=&\theta(2k)-\theta^2(k),\\
&=&\mathop{\sum}\limits_{i\in I_x}\Prob_x(i)\big(1-\Prob_y(i)\big)^{2k} - \left(\mathop{\sum}\limits_{i\in I_x}\Prob_x(i)\big(1-\Prob_y(i)\big)^k\right)^2,\\
&=&\theta(\infty)-\theta^2(\infty) -2\theta(\infty)(1-c)^k\left(\mathop{\sum}\limits_{i\in I^{\star}}\Prob_x(i)\right)+o(1-c)^k,
\end{eqnarray*}
which establishes (\ref{Eqn:XiC2}).

Next we show (\ref{Eqn:Xi1}), which we note gives more precise information than (\ref{ineq:Var21}). Consider the random variable $T$ defined as the smallest $n\ge1$ such that $(I_x\cap I_y)\subset\{Y_1,\ldots,Y_n\}$. We may bound the probability of $T$ being large by  $\Prob(T> k)\le n(1-c)^{k}$, where $n:=|I_x\cap I_y|$ is finite because of condition (a). On the other hand, note that
\[\Prob\big(X_1\notin\{Y_1,\ldots,Y_k\}\mid Y_1,\ldots,Y_k\big)=1-\Prob_x(\{Y_1,\ldots,Y_k\}).\]
Define $W_k:=1-\Prob_x(\{Y_1,\ldots,Y_k\})-\theta(k)$ and observe that, over the event $T\le k$, $W_k=\theta(\infty)-\theta(k)$. Since $|W_k|\le1$, we obtain that
\begin{eqnarray*}
\xi_{0,k}(k) &=& \mathbb{E}\Big(\mathbb{E}\big( W_k^2\mid T>k\big)\Big)\cdot\Prob(T>k)+\mathbb{E}\Big(\mathbb{E}\big( W_k^2\mid T\le k\big)\Big)\cdot\Prob(T\le k),\\
&\le& \Prob(T>k)+\mathbb{E}\Big(\mathbb{E}\big( W_k^2\mid T\le k\big)\Big),\\
&\le& n(1-c)^k+\big(\theta(\infty)-\theta(k)\big)^2.
\end{eqnarray*}
The identity in equation (\ref{Eqn:Xi1}) is now a direct consequence of (\ref{ide:souseful}).
\end{proof}

Our next goal is to show Theorems~\ref{Thm:VarAcc} and~\ref{Thm:CLT}. To do so we rely on the method of projection by Grams and Serfling~\cite{GramsSerfling}. This approach approximates $\hat{\theta}(k)$ by the random variable
\begin{align*}
\hat{\theta}_P(k)&:=\theta(k) + \mathop{\sum}\limits_{i=1}^{n_x}(\Esp(\hat{\theta}(k)|X_i)-\theta(k)) + \mathop{\sum}\limits_{j=1}^{n_y}(\Esp(\hat{\theta}(k)|Y_j)-\theta(k)).
\end{align*}
The projection is the best approximation in terms of mean squared error to $\hat{\theta}(k)$ that is a linear combination of individual functions of each datapoint.

Under the stated conditions, $\hat{\theta}_P(k)$ is the sum of two independent sums of non-degenerate i.i.d. random variables and therefore satisfies the hypotheses of the classical central limit theorem. The variance of the projection is easier to analyze and estimate than the $U$-statistic directly, which is relevant in establishing consistency for the jackknife estimation of variance.

Let
\begin{align*}
R(k)&:=\hat{\theta}(k)-\hat{\theta}_P(k),
\end{align*}
be the remainder of $\hat{\theta}(k)$ that is not accounted for by its projection. When $R(k)$ is small relative to $\hat{\theta}_P(k)$, $\hat{\theta}(k)$ is mostly explained by $\hat{\theta}_P(k)$ in relative terms.

The next lemma summarizes results about the asymptotic properties of $R(k)$, particularly with relation to the scale of $\hat{\theta}_P(k)$ as given by its variance.

\begin{lem}
\label{Lem:Rk}
We have that
\begin{align}
\label{Eqn:Rk1}
\mathbb{V}(\hat{\theta}_P(k))&=n_x^{-1}\xi_{1,0}(k)+k^2 n_y^{-1}\xi_{0,1}(k),\\
\label{ide:ER2k explicit}
\Esp(R^2(k))&=\mathbb{V}(\hat{\theta}(k))-\mathbb{V}(\hat{\theta}_P(k)).
\end{align}
Under assumptions (a)-(c), for a fixed $k\ge1$, we have that 
\begin{align}
\label{Eqn:ForJackknifeFixedk}
\mathop{\lim}\limits_{n_x,n_y\rightarrow\infty}&\left|\frac{\mathbb{V}(\hat{\theta}(k))}{\mathbb{V}(\hat{\theta}_P(k))}-1\right|=0.
\end{align}
Furthermore, under assumptions (a)-(d) we have that
\begin{align}
\label{Eqn:RkUnifConv}
\mathop{\lim}\limits_{n_x,n_y\rightarrow\infty}\max_{k=1:n_y}&\left|\frac{\mathbb{V}(\hat{\theta}(k))}{\mathbb{V}(\hat{\theta}_P(k))}-1\right|=0;\\
\label{Eqn:UnifProbRemConv}
\mathop{\lim}\limits_{n_x,n_y\rightarrow\infty}\max_{k=1:n_y}&\mathbb{P}\left(\left|\frac{\hat{\theta}(k)-\hat{\theta}_P(k)}{\sqrt{\mathbb{V}(\hat{\theta}_P(k))}}\right|>\epsilon\right)=0;
\end{align}
for all $\epsilon>0$.
\end{lem}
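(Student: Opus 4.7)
The plan is to treat~(\ref{Eqn:Rk1}) and~(\ref{ide:ER2k explicit}) as exact identities coming from the H\'ajek-projection construction, and to derive the three limit statements by expanding Hoeffding's formula~(\ref{Eqn:LambdaVar}) specialized to the dissimilarity kernel $h(x_1,y_1,\ldots,y_k)=[\![x_1\notin\{y_1,\ldots,y_k\}]\!]$ against the bounds of Lemma~\ref{Lem:ChooseRelation} (binomial weights when $k^2=o(n_y)$) and Lemma~\ref{Lem:Xi} (exponential decay $\xi_{0,k}(k)=O((1-c)^k)$ and $\xi_{1,k}(k)-\xi_{1,0}(k)=\Theta((1-c)^k)$).

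For~(\ref{Eqn:Rk1}), I would compute the single-variable conditional expectations directly from the U-statistic form in Lemma~\ref{Lem:UStatDef}. Isolating the terms whose outer $X$-index equals $i$ and invoking the $(1,k)$-symmetry of $h$ gives $\Esp(\hat\theta(k)\mid X_i)-\theta(k)=n_x^{-1}(\phi_x(X_i)-\theta(k))$ with $\phi_x(x):=\Esp(h(x,Y_1,\ldots,Y_k))$. Since the fraction of $\sigma\in S_{k,n_y}$ whose image contains any fixed index is $k/n_y$, the same symmetry yields $\Esp(\hat\theta(k)\mid Y_j)-\theta(k)=(k/n_y)(\phi_y(Y_j)-\theta(k))$ with $\phi_y(y):=\Esp(h(X_1,y,Y_2,\ldots,Y_k))$. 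Independence of the $X$- and $Y$-blocks and the i.i.d.\ structure within each then yield $\mathbb{V}(\hat\theta_P(k))=n_x^{-1}\xi_{1,0}(k)+(k^2/n_y)\xi_{0,1}(k)$. For~(\ref{ide:ER2k explicit}) I would apply $\mathrm{Cov}(Z,\Esp(Z\mid\mathcal{G}))=\mathbb{V}(\Esp(Z\mid\mathcal{G}))$ with $Z=\hat\theta(k)$ and $\mathcal{G}$ running through each single-variable sigma-field, summing to get $\mathrm{Cov}(\hat\theta(k),\hat\theta_P(k))=\mathbb{V}(\hat\theta_P(k))$, so that $\mathrm{Cov}(R(k),\hat\theta_P(k))=0$ and Pythagoras delivers the claim.

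For~(\ref{Eqn:ForJackknifeFixedk}), I would split the sum in~(\ref{Eqn:LambdaVar}) according to $j=0$, $j=1$, and $j\ge 2$. Using $\xi_{0,0}(k)=0$ and Lemma~\ref{Lem:ChooseRelation}, the $j=0$ and $j=1$ contributions add up to $\xi_{1,0}(k)/n_x+(k^2/n_y)\xi_{0,1}(k)=\mathbb{V}(\hat\theta_P(k))$ with remainders of order $k^2/(n_xn_y)$ and $k^4/n_y^2$, while the $j\ge 2$ tail, controlled via the monotonicity bounds $\xi_{0,j}(k)\le\xi_{0,k}(k)$, $\xi_{1,j}(k)\le\xi_{1,k}(k)$ and the weight bound $\sum_{j\ge 2}w_j(k)=O(k^4/n_y^2)$, contributes at order $k^4/n_y^2$. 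Condition (c) forces $\xi_{1,0}(k)>0$ for every $k\ge 1$ and $\xi_{1,0}(k)\to\theta(\infty)(1-\theta(\infty))>0$, hence $c_0:=\inf_{k\ge 1}\xi_{1,0}(k)>0$ and $\mathbb{V}(\hat\theta_P(k))\ge c_0/n_x$; dividing establishes~(\ref{Eqn:ForJackknifeFixedk}) for fixed $k$.

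The main obstacle is the uniform statement~(\ref{Eqn:RkUnifConv}), since Lemma~\ref{Lem:ChooseRelation} is limited to $k^2=o(n_y)$. I would resolve this with a cutoff $K_n\to\infty$ satisfying $K_n^4=o(n_y)$ (e.g.\ $K_n=\lfloor\log^2 n_y\rfloor$) and a two-regime argument: for $k\le K_n$, the expansion of the previous paragraph combined with condition (d), which gives $\mathbb{V}(\hat\theta_P(k))\ge c_0/n_x=\Theta(1/n_y)$, yields a uniform relative error of order $K_n^4/n_y=o(1)$; for $k>K_n$, I would bypass Lemma~\ref{Lem:ChooseRelation} entirely and use $\sum_jw_j(k)=1$ with the sandwiches $\xi_{1,0}(k)\le\xi_{1,j}(k)\le\xi_{1,k}(k)$ and $\xi_{0,j}(k)\le\xi_{0,k}(k)$ to conclude $|\mathbb{V}(\hat\theta(k))-\xi_{1,0}(k)/n_x|=O((1-c)^k)$, which against $\mathbb{V}(\hat\theta_P(k))\ge c_0/n_x$ and (d) produces a relative error of order $n_y^2(1-c)^{K_n}=o(1)$. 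Finally,~(\ref{Eqn:UnifProbRemConv}) is immediate from Markov's inequality applied to $R(k)^2$: by~(\ref{ide:ER2k explicit}) the left-hand side is at most $\epsilon^{-2}(\mathbb{V}(\hat\theta(k))/\mathbb{V}(\hat\theta_P(k))-1)$, whose maximum over $k=1\!:\!n_y$ vanishes by~(\ref{Eqn:RkUnifConv}).
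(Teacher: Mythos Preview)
Your proposal is correct and follows essentially the same route as the paper: the H\'ajek-projection identities for~(\ref{Eqn:Rk1}) and~(\ref{ide:ER2k explicit}), a two-regime split for~(\ref{Eqn:RkUnifConv}) that uses Lemma~\ref{Lem:ChooseRelation} for small $k$ and the exponential bounds of Lemma~\ref{Lem:Xi} for large $k$, and Chebyshev for~(\ref{Eqn:UnifProbRemConv}). The only cosmetic differences are your cutoff $K_n=\lfloor\log^2 n_y\rfloor$ in place of the paper's $k_n=\log_b n_y$ with $1<b<1/(1-c)$, and your large-$k$ relative error stated as $n_y^2(1-c)^{K_n}$ where the computation actually yields $n_y(1-c)^{K_n}$---harmless, since $K_n=\log^2 n_y$ absorbs any polynomial prefactor.
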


\begin{proof}
A direct calculation from the form given in (\ref{Eqn:UStatDef}) gives that
\begin{align}
\label{Eqn:VarHatX}
\Esp(\hat{\theta}(k)|X_i))&=n_x^{-1}\Prob(X_i\notin\{Y_1,\ldots,Y_k\}|X_i) + \left(1-n_x^{-1}\right)\theta(k);\\
\nonumber
\mathbb{V}(\Esp(\hat{\theta}(k)|X_i))&=\frac{\xi_{1,0}(k)}{n_x^2};\\
\label{Eqn:VarHatY}
\Esp(\hat{\theta}(k)|Y_i))&=k n_y^{-1}\Prob(X_i\notin\{Y_i,\ldots,Y_{k+i-1}\}|Y_i) + \left(1-k n_y^{-1}\right)\theta(k);\\
\nonumber
\mathbb{V}(\Esp(\hat{\theta}(k)|Y_i))&=\frac{k^2\xi_{0,1}(k)}{n_y^2}.
\end{align}
As $\mathbb{V}(\hat{\theta}_P(k)) = n_x\mathbb{V}(\Esp(\hat{\theta}(k)|X_i)) + n_y\mathbb{V}(\Esp(\hat{\theta}(k)|Y_i))$, (\ref{Eqn:Rk1}) follows. 

To show (\ref{ide:ER2k explicit}), first observe that
\begin{equation}
\label{ide:ER2ka}
\mathbb{E}(R^2(k))=\mathbb{V}(R(k))=\mathbb{V}(\hat\theta(k))+\mathbb{V}(\hat\theta_P(k))-2\,\mbox{Cov}(\hat{\theta}(k),\hat{\theta}_P(k)).
\end{equation}
Next, using the definition of the projection, we obtain that
\begin{eqnarray*}
\mbox{Cov}(\hat{\theta}(k),\hat{\theta}_P(k))&=&\sum_{i=1}^{n_x}\mbox{Cov}(\hat{\theta}(k),\mathbb{E}(\hat\theta(k)|X_i))+\sum_{j=1}^{n_y}\mbox{Cov}(\hat{\theta}(k),\mathbb{E}(\hat\theta(k)|Y_j)),\\
&=&\sum_{i=1}^{n_x}\mathbb{V}(\mathbb{E}(\hat\theta(k)|X_i))+\sum_{j=1}^{n_y}\mathbb{V}(\mathbb{E}(\hat\theta(k)|Y_j)),\\
&=&\mathbb{V}\left(\sum_{i=1}^{n_x}\mathbb{E}(\hat\theta(k)|X_i)+\sum_{j=1}^{n_y}\mathbb{E}(\hat\theta(k)|Y_j)\right),\\
&=&\mathbb{V}(\hat\theta_P(k)),
\end{eqnarray*}
from which (\ref{ide:ER2k explicit}) follows, due to the identity in (\ref{ide:ER2ka}). Note that the last identity implies that $\hat\theta_P(k)$ and $R(k)$ are uncorrelated.

Before continuing, we note that (\ref{Eqn:UnifProbRemConv}) is a direct consequence of (\ref{ide:ER2k explicit}), (\ref{Eqn:RkUnifConv}) and Chebyshev's inequality~\cite{Durrett}. To complete the proof of the lemma all reduces therefore to show (\ref{Eqn:UnifProbRemConv}) under conditions (a)-(d). Indeed, if $b>1$ and we let $k_n=\log_b(n_y)$ then due to the identities in (\ref{Eqn:LambdaVar}) and (\ref{Eqn:Rk1}) and Lemma~\ref{Lem:ChooseRelation}, we obtain under (a)-(c) that
\[\mathbb{V}(\hat\theta(k))=\mathbb{V}(\hat\theta_P(k))+O\left(\frac{k^2}{n_xn_y}+\frac{k^4}{n_y^2}\right),\]
uniformly for all $k=1:k_n$, as $n_x,n_y\to\infty$. Since $\mathbb{V}(\hat\theta_P(k))>0$ for all $k\ge1$, we have thus shown (\ref{Eqn:ForJackknifeFixedk}). Furthermore, note that $\xi_{1,0}(k)>0$ for all $k\ge1$; in particular, due to (\ref{Eqn:XiC2}) and conditions (a)-(d), we can assert that $\inf_{k\ge1}\xi_{1,0}(k)>0$. Since $\xi_{0,1}(k)\ge0$, the above identity together with the one in (\ref{Eqn:Rk1}) let us conclude that \[\max_{k=1:k_n}\left|\frac{\mathbb{V}(\hat\theta(k))}{\mathbb{V}(\hat\theta_P(k))}-1\right|=O\left(\frac{k_n^2}{n_y}+\frac{n_x k_n^4}{n_y^2}\right),\]
as $n_x,n_y\to\infty$. Because of condition (d), the big-O term above tends to $0$. As a result:
\begin{equation}
\label{ide:primera}
\lim_{n_x,n_y\to\infty}\,\max_{k=1:k_n}\left|\frac{\mathbb{V}(\hat\theta(k))}{\mathbb{V}(\hat\theta_P(k))}-1\right|=0.
\end{equation}

On the other hand, (\ref{ide:ER2k explicit}) implies that $\mathbb{V}(\hat\theta(k))\ge\mathbb{V}(\hat\theta_P(k))$. Hence, using (\ref{ide:CombIdent}) and (\ref{ide:sosouseful}) to bound from above the variance of the U-statistic, we obtain:
\[1\le\frac{\mathbb{V}(\hat\theta(k))}{\mathbb{V}(\hat\theta_P(k))}\le\frac{\xi_{0,k}(k)+\xi_{1,k}(k)/n_x}{k^2\xi_{0,1}(k)/n_y+\xi_{1,0}(k)/n_x}\le n_x\,\frac{\xi_{0,k}(k)}{\xi_{1,0}(k)}+\frac{\xi_{1,k}(k)}{\xi_{1,0}(k)}=1+n_x\cdot O\big((1-c)^k\big),\]
as $k\to\infty$, where for the last identity we have used (\ref{Eqn:Xi2}) and (\ref{Eqn:Xi1}). Since $n_x=\Theta(n_y)$, it follows from the above identity that
\[\max_{k=k_n:n_y}\left|\frac{\mathbb{V}(\hat\theta(k))}{\mathbb{V}(\hat\theta_P(k))}-1\right|=O\big(n_y(1-c)^{k_n}\big)=O\big(n_y^{1+\log_b(1-c)}\big).\]
In particular, if the base-$b$ in the logarithm is selected to satisfy that $1<b<1/(1-c)$, then
\begin{equation}
\label{ide:segunda}
\lim_{n_x,n_y\to\infty}\,\max_{k=k_n:n_y}\left|\frac{\mathbb{V}(\hat\theta(k))}{\mathbb{V}(\hat\theta_P(k))}-1\right|=0.
\end{equation}
The identities in equation (\ref{ide:primera}) and (\ref{ide:segunda}) show (\ref{Eqn:UnifProbRemConv}), which completes the proof of the lemma.
\end{proof}

\noindent\textbf{Proof of Theorem~\ref{Thm:CLT}.} For a fixed $k$, note that $\hat\theta_P(k)$ is the sum of two independent sums of non-degenerate i.i.d. random variables and thus,
\[(\hat\theta_P(k)-\theta(k))/\sqrt{\mathbb{V}(\hat\theta_P(k))}\]
is asymptotically a standard Normal random variable as $n_x,n_y\to\infty$ by the classical Central Limit Theorem. We would like to show however that this convergence also applies if we let $k$ vary with $n_x$ and $n_y$. We do so using the Berry-Esseen inequality~\cite{Shevtsova}. Motivated by this we define the random variables
\begin{align*}
X_i^{\prime}(k)&:= \frac{\Esp(\hat{\theta}(k)|X_i)-\theta(k)}{\sqrt{\mathbb{V}(\hat{\theta}_P(k))}};\\
Y_j^{\prime}(k)&:=\frac{\Esp(\hat{\theta}(k)|Y_j)-\theta(k)}{\sqrt{\mathbb{V}(\hat{\theta}_P(k))}}.
\end{align*}
Note that $\Esp(X_i^{\prime}(k)) = \Esp(Y_j^{\prime}(k)) = 0$, and that 
\begin{align*}
\mathop{\sum}\limits_{i=1}^{n_x}\Esp(|X_i^{\prime}(k)|^2) + \mathop{\sum}\limits_{j=1}^{n_y}\Esp(|Y_j^{\prime}(k)|^2) = 1.
\end{align*}
We need to show that
\begin{equation}
\label{ide:InterBerryEss}
\mathop{\sum}\limits_{i=1}^{n_x}\Esp(|X_i^{\prime}(k)|^3) + \mathop{\sum}\limits_{j=1}^{n_y}\Esp(|Y_j^{\prime}(k)|^3) = o(1),
\end{equation}
uniformly for $k=1:n_y$, as $n_x,n_y\to\infty$.

Note that from (\ref{Eqn:VarHatX}) and (\ref{Eqn:VarHatY}),
\begin{align*}
\big|\Esp(\hat{\theta}(k)|X_i)-\theta(k)\big|^3&=\frac{|\Prob(X_i\notin\{Y_1,\ldots,Y_k\}|X_i)-\theta(k)|^3}{n^3_x};\\
\big|\Esp(\hat{\theta}(k)|Y_i)-\theta(k)\big|^3&=\frac{k^3|\Prob(X_1\notin\{Y_i,\ldots,Y_{k+1-i}\}|Y_i)-\theta(k)|^3}{n^3_y}.
\end{align*}
Let
\begin{align*}
\eta_{1,0}(k)&:=\Esp|\Prob(X_i\notin\{Y_1,\ldots,Y_k\}|X_i)-\theta(k)|^3;\\
\eta_{0,1}(k)&:=\Esp|\Prob(X_1\notin\{Y_i,\ldots,Y_{k+1-i}\}|Y_i)-\theta(k)|^3.
\end{align*}
It follows from (\ref{Eqn:Rk1}) that
\begin{align*}
\mathop{\sum}\limits_{i=1}^{n_x}\Esp(|X_i^{\prime}(k)|^3) + \mathop{\sum}\limits_{j=1}^{n_y}\Esp(|Y_j^{\prime}(k)|^3) &= \frac{\eta_{1,0}(k)/n^2_x+k^3\eta_{0,1}(k)/n^2_y}{\big(\mathbb{V}(\hat{\theta}_P(k))\big)^{3/2}}= \frac{\eta_{1,0}(k)/n^2_x+k^3\eta_{0,1}(k)/n^2_y}{\big(\xi_{1,0}(k)/n_x+k^2\xi_{0,1}(k)/n_y\big)^{3/2}}.
\end{align*}
But note that $0\le\eta_{0,1}(k)\le\xi_{0,1}(k)$. Since, according to Lemma~\ref{Lem:Xi}, $\xi_{0,1}(k)$ decreases exponentially fast, we obtain
\begin{align*}
k^3\eta_{0,1}(k)/n_y^2&=O(n_y^{-2}),
\end{align*}
uniformly for all $k=1:n_y$, as $n_y\rightarrow\infty$. On the other hand, $0\le\eta_{1,0}(k)\le\xi_{1,0}(k)\le1$. Furthermore, (\ref{Eqn:XiC2}) implies that $\inf_{k\ge1}\xi_{1,0}(k)>0$. Since $n_x = \Theta(n_y)$, for some finite constant $C>0$ we find that
\begin{align*}
\mathop{\sum}\limits_{i=1}^{n_x}\Esp(|X_i^{\prime}(k)|^3) + \mathop{\sum}\limits_{j=1}^{n_y}\Esp(|Y_j^{\prime}(k)|^3) &\le C\frac{1/n^2_x+ 1/n^2_y}{\left(\mathop{\inf}\limits_{k\ge1}\xi_{1,0}(k)/n_x\right)^{3/2}}=O\left(\frac{1}{\sqrt{n_x}}\right),
\end{align*}
which shows (\ref{ide:InterBerryEss}).

The above establishes convergence in distribution of $(\hat{\theta}_P(k)-\theta(k))/\sqrt{\mathbb{V}(\hat{\theta}_P(k))}$ to a standard normal random variable uniformly for $k=1:n_y$, as $n_x,n_y\to\infty$. The end of the proof is an adaptation of the proof of Slutsky's Theorem~\cite{Slutsky}. Indeed, note that
\begin{align}
\label{Int Slutzky}
\Prob\left(\frac{\hat{\theta}(k)-\theta(k)}{\sqrt{\mathbb{V}(\hat{\theta}(k))}}\le t\right)=\Prob\left(\frac{\hat{\theta}_P(k)-\theta(k)}{\sqrt{\mathbb{V}(\hat{\theta}_P(k))}}+\frac{\hat{\theta}(k)-\hat{\theta}_P(k)}{\sqrt{\mathbb{V}(\hat{\theta}_P(k))}}\le t\sqrt{\frac{\mathbb{V}(\hat{\theta}(k))}{\mathbb{V}(\hat{\theta}_P(k))}}\right).
\end{align}
From this identity, it follows for any fixed $\epsilon>0$ that
\[\Prob\left(\frac{\hat{\theta}(k)-\theta(k)}{\sqrt{\mathbb{V}(\hat{\theta}(k))}}\le t\right)\le \Prob\left(\frac{\hat{\theta}_P(k)-\theta(k)}{\sqrt{\mathbb{V}(\hat{\theta}_P(k))}}\le t\sqrt{\frac{\mathbb{V}(\hat{\theta}(k))}{\mathbb{V}(\hat{\theta}_P(k))}}+\epsilon\right)+\Prob\left(\left|\frac{\hat{\theta}(k)-\hat{\theta}_P(k)}{\sqrt{\mathbb{V}(\hat{\theta}_P(k))}}\right|\ge\epsilon\right).\]
The first term on the right-hand side of the above inequality can be made as close to $\Prob[Z\le t+\epsilon]$ as wanted, uniformly for $k=1:n_y$, as $n_x,n_y\to\infty$, because of (\ref{Eqn:RkUnifConv}). On the other hand, the second term tends to $0$ uniformly for $k=1:n_y$ because of (\ref{Eqn:UnifProbRemConv}). Letting $\epsilon\to0^+$, shows that
\[\limsup_{n_x,n_y\to\infty}\,\max_{k=1:n_y}\Prob\left(\frac{\hat{\theta}(k)-\theta(k)}{\sqrt{\mathbb{V}(\hat{\theta}(k))}}\le t\right)\le\Prob[Z\le t].\]
Similarly, using (\ref{Int Slutzky}), we have:
\[\Prob\left(\frac{\hat{\theta}(k)-\theta(k)}{\sqrt{\mathbb{V}(\hat{\theta}(k))}}\le t\right)\ge \Prob\left(\frac{\hat{\theta}_P(k)-\theta(k)}{\sqrt{\mathbb{V}(\hat{\theta}_P(k))}}\le t\sqrt{\frac{\mathbb{V}(\hat{\theta}(k))}{\mathbb{V}(\hat{\theta}_P(k))}}-\epsilon\right)-\Prob\left(\left|\frac{\hat{\theta}(k)-\hat{\theta}_P(k)}{\sqrt{\mathbb{V}(\hat{\theta}_P(k))}}\right|\ge\epsilon\right),\]
and a similar argument as before shows now that
\[\liminf_{n_x,n_y\to\infty}\,\max_{k=1:n_y}\Prob\left(\frac{\hat{\theta}(k)-\theta(k)}{\sqrt{\mathbb{V}(\hat{\theta}(k))}}\le t\right)\ge\Prob[Z\le t],\]
which completes the proof of the theorem.\hfill$\Box$\\

We finally show Theorem~\ref{Thm:VarAcc}, for which we first show the following result.
\begin{lem}
\label{Lem:JackknifeAcc}
Let $S^i_{k,n}$ be the set of one-to-one functions from $\{1,\ldots,k\}$ into $\{1,\ldots,n\}/\{i\}$. Consider the kernel $h(x_1,y_1,\ldots,y_k):=[\![x_1\notin\{y_1,\ldots,y_k\}]\!]$, and define
\begin{align}
\label{Eqn:JKKernStart}
\hat{\theta}_x^{i}(k)&:=\frac{1}{|S_{k,n_y}|}\mathop{\sum}\limits_{\sigma\in S_{k,n_y}}\frac{1}{n_x-1}\mathop{\sum}\limits_{j=1,j\ne i}^{n_x}h(X_j,Y_{\sigma(1)},\ldots,Y_{\sigma(k)});\\
\hat{\theta}_x^{i\prime}(k)&:=\frac{1}{|S_{k,n_y}|}\mathop{\sum}\limits_{\sigma\in S_{k,n_y}}h(X_i,Y_{\sigma(1)},\ldots,Y_{\sigma(k)});\\
\label{Eqn:JKKernNo2}
\hat{\theta}_y^{i}(k)&:=\frac{1}{|S^i_{k,n_y}|}\mathop{\sum}\limits_{\sigma\in S^i_{k,n_y}}\frac{1}{n_x}\mathop{\sum}\limits_{j=1}^{n_x}h(X_j,Y_{\sigma(1)},\ldots,Y_{\sigma(k)});\\
\label{Eqn:JKKernEnd}
\hat{\theta}_y^{i\prime}(k)&:=\frac{1}{|S^i_{k-1,n_y}|}\mathop{\sum}\limits_{\sigma\in S^i_{k-1,n_y}}\frac{1}{n_x}\mathop{\sum}\limits_{j=1}^{n_x}h(X_j,Y_i,Y_{\sigma(1)},\ldots,Y_{\sigma(k-1)}).
\end{align}
Then, for each $k\ge1$ and $\epsilon>0$,
\begin{align}
\label{Eqn:JKConsX}
\mathop{\lim}\limits_{n_x,n_y\to\infty}\Prob\left(\left|\mathop{\sum}\limits_{i=1}^{n_x}\frac{\left(\hat{\theta}_x^i(k)-\hat{\theta}_x^{i\prime}(k)\right)^2}{n_x}-\xi_{1,0}(k)\right|>\epsilon\right)&=0;\\
\label{Eqn:JKConsY}
\mathop{\lim}\limits_{n_x,n_y\to\infty}\Prob\left(\left|\mathop{\sum}\limits_{i=1}^{n_y}\frac{\left(\hat{\theta}_y^i(k)-\hat{\theta}_y^{i\prime}(k)\right)^2}{n_y}-\xi_{0,1}(k)\right|>\epsilon\right)&=0.
\end{align}
\end{lem}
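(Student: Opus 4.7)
\noindent\textbf{Proof plan for Lemma~\ref{Lem:JackknifeAcc}.} For (\ref{Eqn:JKConsX}), the guiding intuition is that for fixed $k$ the jackknife difference $A_i:=\hat{\theta}_x^{i\prime}(k)-\hat{\theta}_x^{i}(k)$ should closely track the centered random variable $B_i:=(1-\Prob_y(X_i))^k-\theta(k)$, since $\Esp[\hat{\theta}_x^{i\prime}(k)\mid X_i]=(1-\Prob_y(X_i))^k$ and $\Esp[\hat{\theta}_x^{i}(k)]=\theta(k)$, with both statistics having small variance when $n_y$ (resp.\ $n_x$) is large. I therefore plan to reduce the limit of $n_x^{-1}\sum A_i^2$ to that of $n_x^{-1}\sum B_i^2$.

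The $B_i$ are i.i.d.\ and bounded by $1$, with $\Esp[B_1^2]=\mathbb{V}(\Prob(X_1\notin\{Y_1,\ldots,Y_k\}\mid X_1))=\xi_{1,0}(k)$, so the strong law of large numbers gives $n_x^{-1}\sum_{i=1}^{n_x}B_i^2\to\xi_{1,0}(k)$ almost surely. Next I decompose $A_i=B_i+U_i+V_i$ with $U_i:=\hat{\theta}_x^{i\prime}(k)-(1-\Prob_y(X_i))^k$ and $V_i:=\theta(k)-\hat{\theta}_x^{i}(k)$. Conditional on $X_i$, $\hat{\theta}_x^{i\prime}(k)$ is a one-sample U-statistic in the $y$-data with a kernel bounded by $1$, so the Hoeffding-type variance bound used in the proof of Lemma~\ref{Lem:L2Unif} yields $\mathbb{V}(\hat{\theta}_x^{i\prime}(k)\mid X_i)=O(1/n_y)$ uniformly in $X_i$, hence $\Esp[U_i^2]=O(1/n_y)$. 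Similarly, $\hat{\theta}_x^{i}(k)$ has the distribution of the dissimilarity U-statistic based on $(n_x-1,n_y)$ samples, so Lemma~\ref{Lem:L2Unif} gives $\Esp[V_i^2]=\mathbb{V}(\hat{\theta}_x^{i}(k))=o(1)$ for fixed $k$. Since $|A_i|,|B_i|\le 1$ and $A_i^2-B_i^2=(U_i+V_i)(A_i+B_i)$, Cauchy-Schwarz gives
\[\bigg|\frac{1}{n_x}\sum_{i=1}^{n_x}(A_i^2-B_i^2)\bigg|\le 2\sqrt{2}\,\bigg(\frac{1}{n_x}\sum_{i=1}^{n_x}(U_i^2+V_i^2)\bigg)^{1/2},\]
whose expectation is $O\big(\sqrt{1/n_x+1/n_y}\big)$. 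Markov's inequality upgrades this to convergence in probability, and combined with the SLLN limit above this establishes (\ref{Eqn:JKConsX}).

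For (\ref{Eqn:JKConsY}) the argument is symmetric after swapping the roles of the two urns. Setting $B'_i:=\Esp[h(X_1,Y_i,Y'_1,\ldots,Y'_{k-1})\mid Y_i]-\theta(k)$, with $Y'_1,\ldots,Y'_{k-1}$ independent $\Prob_y$-draws, exchangeability gives $\Esp[(B'_1)^2]=\xi_{0,1}(k)$ and so $n_y^{-1}\sum(B'_i)^2\to\xi_{0,1}(k)$ almost surely. Conditional on $Y_i$, $\hat{\theta}_y^{i\prime}(k)$ is a U-statistic over the remaining $x$- and $y$-data with conditional mean $\Esp[h(X_1,Y_i,Y'_1,\ldots,Y'_{k-1})\mid Y_i]$, while $\hat{\theta}_y^{i}(k)$ is the dissimilarity U-statistic on $(n_x,n_y-1)$ samples with mean $\theta(k)$; the identical decomposition-plus-Cauchy-Schwarz argument therefore applies.

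The main obstacle is the uniform-in-$i$ conditional variance bound used to control $\Esp[U_i^2]$ and its $y$-analogue: the kernel of the conditional U-statistic depends on the random conditioning variable, so some care is required in applying the Hoeffding-type bound. The crucial point is that this bound depends only on the uniform sup-norm of the kernel and on the sample sizes, and therefore passes through uniformly over the conditioning variable.
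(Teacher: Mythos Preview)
Your argument is correct and takes a genuinely different route from the paper's. The paper first invokes Sen's almost sure convergence theorem for U-statistics to obtain, for each fixed index $i$, the a.s.\ limits $\hat\theta_x^{i}(k)\to\theta(k)$, $\hat\theta_x^{i\prime}(k)\to\Esp[h(X_i,Y_1,\ldots,Y_k)\mid X_i]$, and the analogous $y$-limits. It then analyzes the averages $U(k)$ and $V(k)$ directly by a mean--variance decomposition: the Bounded Convergence Theorem transfers the pointwise limits to $\Esp[U(k)]\to\xi_{1,0}(k)$, and the cross-covariance term in $\mathbb{V}(U(k))$ vanishes because the limiting summands for distinct $i$ are independent. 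Chebyshev then finishes.

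Your proof instead isolates the i.i.d.\ ``target'' $B_i=(1-\Prob_y(X_i))^k-\theta(k)$ explicitly, handles $n_x^{-1}\sum B_i^2$ by the strong law, and controls the discrepancy $A_i^2-B_i^2$ in $L^1$ via Cauchy--Schwarz together with Hoeffding-type second-moment bounds on the residual U-statistics $U_i,V_i$. This is more elementary in that it never appeals to Sen's result, and it delivers an explicit rate $O\big((1/n_x+1/n_y)^{1/2}\big)$ for the remainder, which the paper's BCT-based argument does not. The paper's route, on the other hand, is more modular: once the pointwise a.s.\ limits are granted, the rest is a clean second-moment computation that does not require tracking the individual conditional variances. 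Your observation in the final paragraph---that the conditional Hoeffding bound is uniform because it depends only on the kernel's sup-norm---is exactly the right justification for the step that the paper sidesteps by citing Sen.
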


\begin{proof}
Fix $k\ge1$. We first use a result by Sen~\cite{Sen} to show that, for each $i\ge1$:
\begin{align}
\label{Eqn:KernelX1}
\mathop{\lim}\limits_{n_x,n_y\rightarrow\infty}\hat{\theta}_x^i(k)&=\theta(k);\\
\label{Eqn:KernelX2}
\mathop{\lim}\limits_{n_x,n_y\rightarrow\infty}\hat{\theta}_x^{i\prime}(k)&=\mathbb{E}(h(X_i,Y_1,\ldots,Y_k)|X_i);\\
\label{Eqn:KernelY1}
\mathop{\lim}\limits_{n_x,n_y\rightarrow\infty}\hat{\theta}_y^i(k)&=\theta(k);\\
\label{Eqn:KernelY2}
\mathop{\lim}\limits_{n_x,n_y\rightarrow\infty}\hat{\theta}_y^{i\prime}(k)&=\mathbb{E}(h(X_1,Y_i,\ldots,Y_{i+k-1})|Y_i);
\end{align}
in an almost sure sense. Indeed, assume without loss of generality that $i=1$. As the kernel functions found in (\ref{Eqn:JKKernStart}) and (\ref{Eqn:JKKernNo2}) are bounded, the hypotheses of Theorem 1 in~\cite{Sen} are satisfied, from which (\ref{Eqn:KernelX1}) and (\ref{Eqn:KernelY1}) are immediate. Similarly, because $X_1$ and $Y_1$ are discrete random variables, (\ref{Eqn:KernelX2}) and (\ref{Eqn:KernelY2}) also follow from~\cite{Sen} .

Define
\begin{align}
\label{Eqn:JKUK}
U(k)&:=\mathop{\sum}\limits_{i=1}^{n_x}\frac{(\hat{\theta}_x^i(k)-\hat{\theta}_x^{i\prime}(k))^2}{n_x};\\
\label{Eqn:JKVK}
V(k)&:=\mathop{\sum}\limits_{i=1}^{n_y}\frac{(\hat{\theta}_y^i(k)-\hat{\theta}_y^{i\prime}(k))^2}{n_y};
\end{align} 
and observe that
\begin{align*}
\mathbb{V}\left(U(k)\right)&=\frac{\mathbb{V}\left((\hat{\theta}_x^1(k)-\hat{\theta}_x^{1\prime}(k))^2\right)}{n_x}+\frac{n_x-1}{n_x}\hbox{Cov}\left((\hat{\theta}_x^1(k)-\hat{\theta}_x^{1\prime}(k))^2,(\hat{\theta}_x^2(k)-\hat{\theta}_x^{2\prime}(k))^2\right);\\
\mathbb{V}\left(V(k)\right)&=\frac{\mathbb{V}\left((\hat{\theta}_y^1(k)-\hat{\theta}_y^{1\prime}(k))^2\right)}{n_y}+\frac{n_y-1}{n_y}\hbox{Cov}\left((\hat{\theta}_y^1(k)-\hat{\theta}_y^{1\prime}(k))^2,(\hat{\theta}_y^2(k)-\hat{\theta}_y^{2\prime}(k))^2\right).
\end{align*}
Furthermore, due to (\ref{Eqn:KernelX1})-(\ref{Eqn:KernelY2}), we have that
\begin{eqnarray}
\label{Eqn:ToFixCite1}
\lim_{n_x,n_y\to\infty}(\hat{\theta}_x^i(k)-\hat{\theta}_x^{i\prime}(k))^2&=&(\theta(k)-\mathbb{E}(h(X_i,Y_1,\ldots,Y_k)|X_i))^2;\\
\label{Eqn:ToFixCite2}
\lim_{n_x,n_y\to\infty}(\hat{\theta}_y^i(k)-\hat{\theta}_y^{i\prime}(k))^2&=&(\theta(k)-\mathbb{E}(h(X_1,Y_i,\ldots,Y_{i+k-1})|Y_i))^2.
\end{eqnarray}
But note that, for $i\ne j$, $(\theta(k)-\mathbb{E}(h(X_i,Y_1,\ldots,Y_k)|X_i))^2$ and $(\theta(k)-\mathbb{E}(h(X_j,Y_1,\ldots,Y_k)|X_j))^2$ are independent and hence uncorrelated. Similarly, the random variables $(\theta(k)-\mathbb{E}(h(X_1,Y_i,\ldots,Y_{i+k-1})|Y_i))^2$ and $(\theta(k)-\mathbb{E}(h(X_1,Y_j,\ldots,Y_{j+k-1})|Y_j))^2$ are independent. Since $|\hat{\theta}_x^i(k)-\hat{\theta}_x^{i\prime}(k)|\le 1$ and $|\hat{\theta}_y^i(k)-\hat{\theta}_y^{i\prime}(k)|\le 1$, it follows from (\ref{Eqn:ToFixCite1}) and (\ref{Eqn:ToFixCite2}), and the Bounded Convergence Theorem~\cite{Durrett} that
\begin{align}
\label{Eqn:ForDurrX}
\mathbb{V}\left(U(k)\right)&=o(1);\\
\label{Eqn:ForDurrY}
\mathbb{V}\left(V(k)\right)&=o(1);
\end{align}
as $n_x,n_y\to\infty$.

Finally, by (\ref{Eqn:Xi0j}) and (\ref{Eqn:Xi1j}) it follows that 
\begin{align*}
\xi_{1,0}(k)&=\Esp\left(\theta(k)-\mathbb{E}(h(X_1,Y_1,\ldots,Y_k)|X_1)\right)^2;\\
\xi_{0,1}(k)&=\Esp\left(\theta(k)-\mathbb{E}(h(X_1,Y_1,\ldots,Y_k)|Y_1)\right)^2.
\end{align*}
In particular, again by the Bounded Convergence Theorem, we have that $\lim_{n_x,n_y\to\infty}\mathbb{E}\left(U(k)\right)=\xi_{1,0}(k)$ and $\lim_{n_x,n_y\to\infty}\mathbb{E}\left(V(k)\right)=\xi_{0,1}(k)$. Since
\begin{align*}
U(k)-\xi_{1,0}(k)&=\big(\mathbb{E}(U(k))-\xi_{1,0}(k)\big)+\big(U(k)-\mathbb{E}(U(k))\big);\\
V(k)-\xi_{0,1}(k)&=\big(\mathbb{E}(V(k))-\xi_{0,1}(k)\big)+\big(V(k)-\mathbb{E}(V(k))\big);
\end{align*}
the lemma is now a direct consequence of (\ref{Eqn:ForDurrX}) and (\ref{Eqn:ForDurrY}), and Theorem 1.5.4 of Durrett~\cite{Durrett}.
\end{proof}

\noindent\textbf{Proof of Theorem~\ref{Thm:VarAcc}.}
Fix $k\ge1$. Using (\ref{Eqn:UStatDef}) we have that
\begin{align*}
\hat{\theta}(k)&=\left(1-\frac{1}{n_x}\right)\hat{\theta}_x^{i}(k) + \frac{1}{n_x}\hat{\theta}_x^{i\prime}(k);\\
\hat{\theta}_x^i(k)-\hat{\theta}(k)&=\frac{1}{n_x}\left(\hat{\theta}_x^i(k)-\hat{\theta}_x^{i\prime}(k)\right);\\
\hat{\theta}(k)&=\left(1-\frac{k}{n_y}\right)\hat{\theta}_y^{i}(k) + \frac{k}{n_y}\hat{\theta}_y^{i\prime}(k);\\
\hat{\theta}_y^i(k)-\hat{\theta}(k)&=\frac{k}{n_y}\left(\hat{\theta}_y^i(k)-\hat{\theta}_y^{i\prime}(k)\right).
\end{align*}

It follows by (\ref{Eqn:Vxu}) and (\ref{Eqn:Vyu}) that
\begin{align}
\label{Eqn:VarProbSx}
S^2_x(k)&=\frac{n_x-1}{n_x}\cdot\frac{U(k)}{n_x},\\
\label{Eqn:VarProbSy}
S^2_y(k)&=\frac{n_y-1}{n_y}\cdot\frac{k^2V(k)}{n_y},
\end{align}
where $U(k)$ and $V(k)$ are as in (\ref{Eqn:JKUK}) and (\ref{Eqn:JKVK}), respectively. Furthermore, observe that
\begin{align*}
S^2(k)&=S_x^2(k)+S_y^2(k)=\frac{n_x-1}{n_x}\cdot\frac{U(k)}{n_x} + \frac{n_y-1}{n_y}\cdot\frac{k^2V(k)}{n_y}.
\end{align*}
In particular, due to (\ref{Eqn:Rk1}), we obtain that
\[\left|\frac{S^2(k)}{\mathbb{V}(\hat\theta_P(k))}-1\right|\le\frac{|U(k)-\xi_{1,0}(k)|}{\xi_{1,0}(k)}+\frac{|V(k)-\xi_{0,1}(k)|}{\xi_{0,1}(k)}+\frac{|U(k)|}{n_x\,\xi_{1,0}(k)}+\frac{|V(k)|}{n_y\,\xi_{0,1}(k)}.\]
By Lemma~\ref{Lem:JackknifeAcc}, $U(k)$ converges in probability to $\xi_{1,0}(k)$, while similarly $V(k)$ converges in probability to $\xi_{0,1}(k)$; in particular, the first two terms on the right-hand side of the inequality converge to $0$ in probability. Since $|U(k)|\le1$ and $|V(k)|\le1$, the same can be said about the last two terms of the inequality. Consequently, $S^2(k)/\mathbb{V}(\hat\theta_P(k))$ converges to $1$ in probability, as $n_x,n_y\to\infty$. As stated in (\ref{Eqn:ForJackknifeFixedk}), however, conditions (a)-(c) imply that  $\mathbb{V}(\hat{\theta}_P(k))$ and $\mathbb{V}(\hat{\theta}(k))$ are asymptotically equivalent as $n_x,n_y\to\infty$, from which the theorem follows.\hfill$\Box$

% Do NOT remove this, even if you are not including acknowledgments
\section*{Acknowledgments}
We thank Rob Knight for insightful discussions and comments about this manuscript, and Antonio Gonzalez for providing processed OTU tables from the Human Microbiome Project.

%\section*{References}
% The bibtex filename
\bibliography{BibTexData}

\section*{Figure Legends}

\begin{figure}[ht!]
\begin{center}
\includegraphics[scale=0.93]{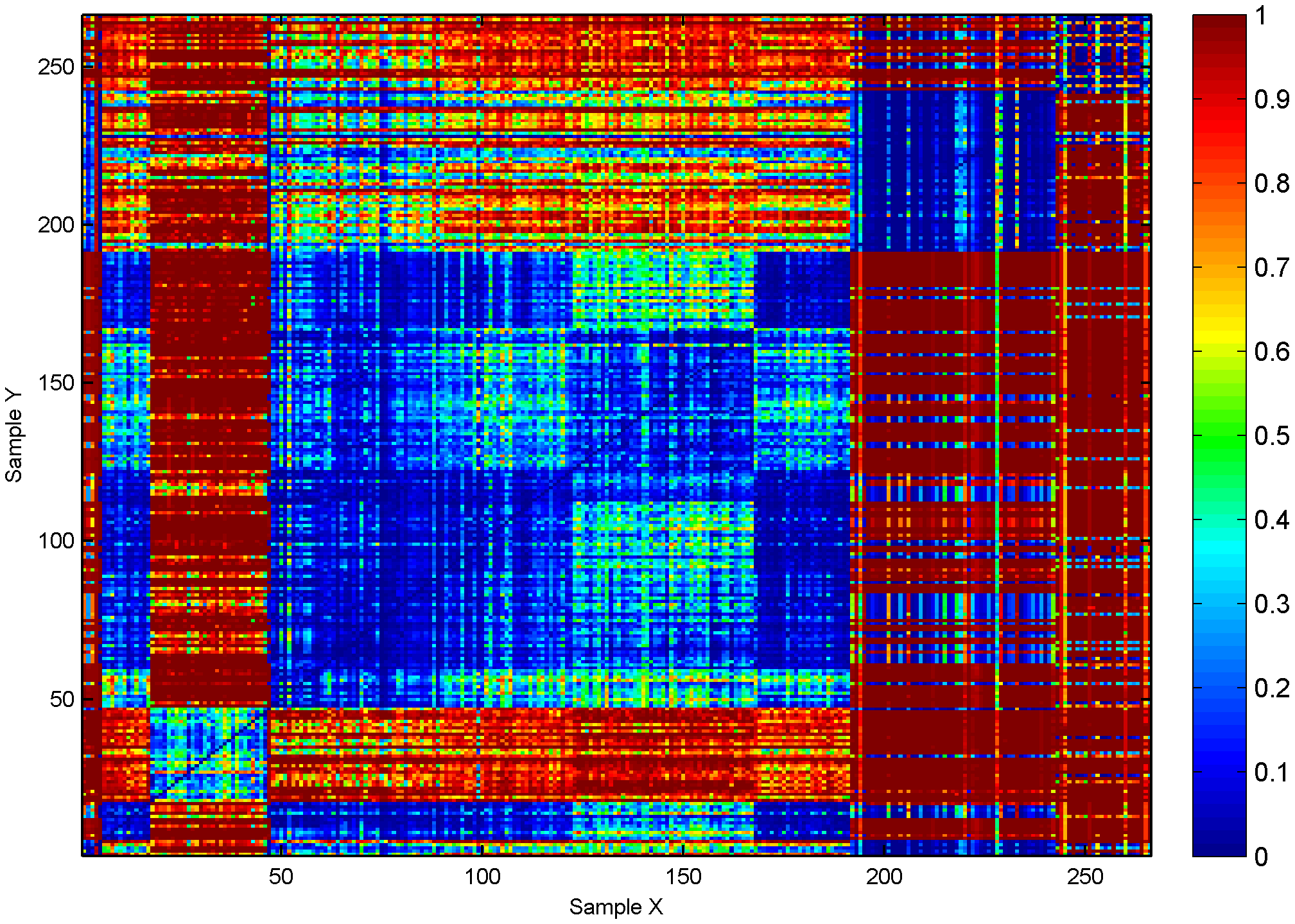}
\end{center}
\caption{{\bf Dissimilarity estimates.} Heat map of $\hat{\theta}(n_y)$ sorted by site location metadata. Here, the $x$-axis denotes the sample from the environment corresponding to urn-$x$, and similarly for the $y$-axis. The entries on the diagonal are set to zero.}
\label{Fig:ThetaMap}
\end{figure}

\begin{figure}[ht!]
\begin{center}
\includegraphics{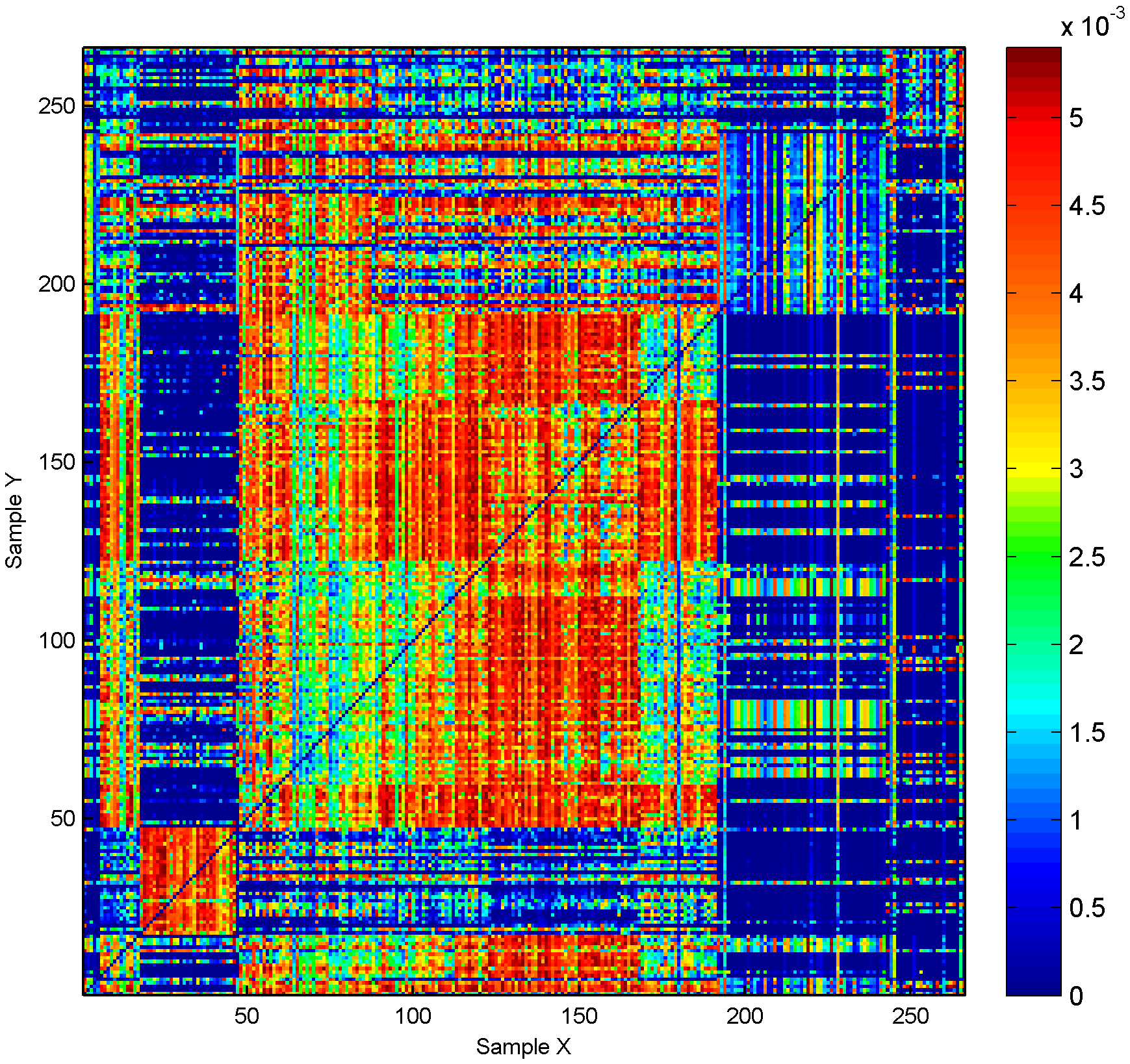}
\end{center}
\caption{{\bf Error estimates.} Heat map of $S(n_y)$ sorted by site location metadata. Here, the $x$-axis also denotes the sample from the environment corresponding to urn-$x$, and similarly for the $y$-axis, and the entries on the diagonal are again set to zero.}
\label{Fig:ThetaVarMap}
\end{figure}

\begin{figure}[ht!]
\begin{center}
\includegraphics{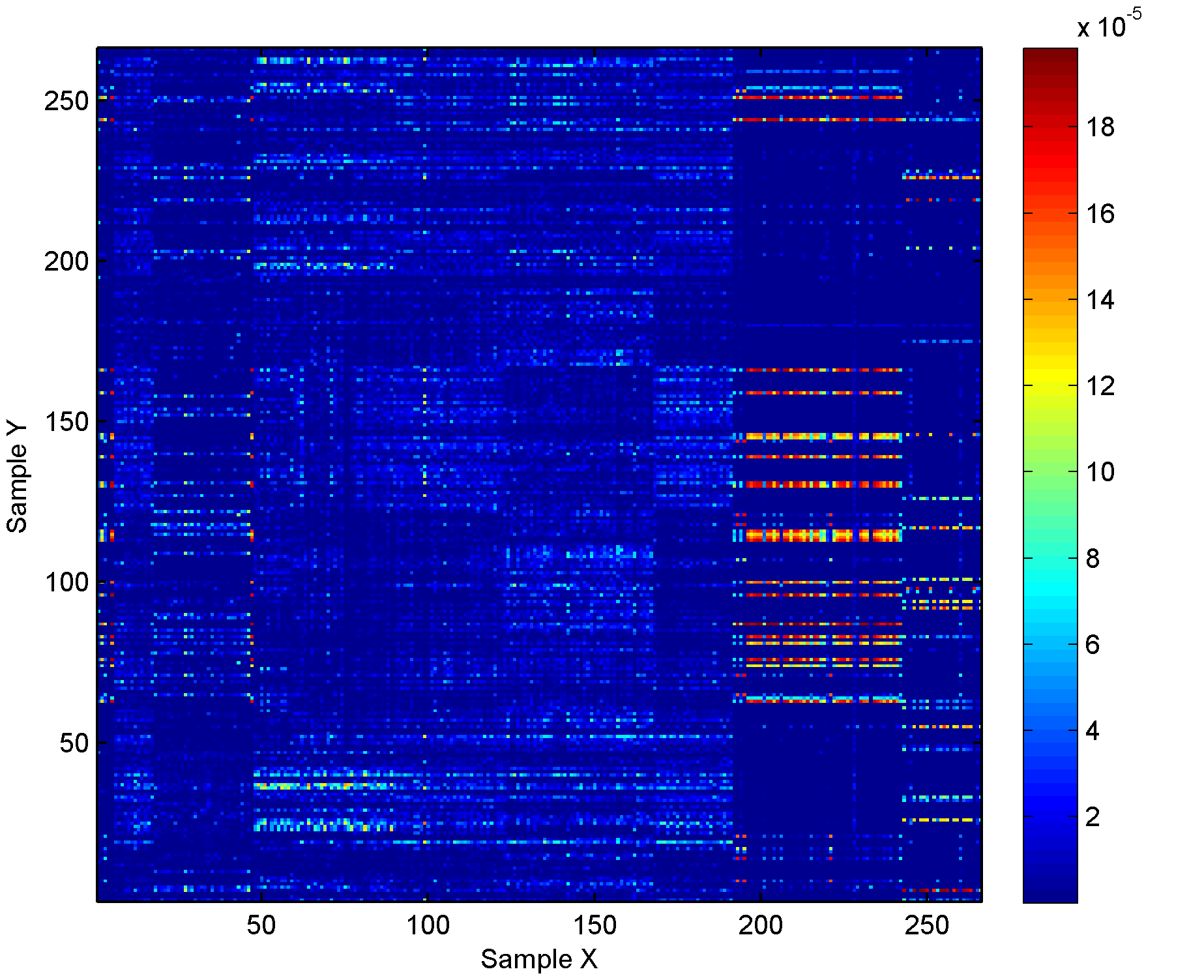}
\end{center}
\caption{{\bf Discrete derivative estimates.} Heat map of $|\hat{\theta}(n_y)-\hat{\theta}(n_y-1)|$, sorted by site location metadata, following the same conventions as in the previous figures.}
\label{Fig:DThetaMap}
\end{figure}

\begin{figure}[ht!]
\begin{center}
\includegraphics{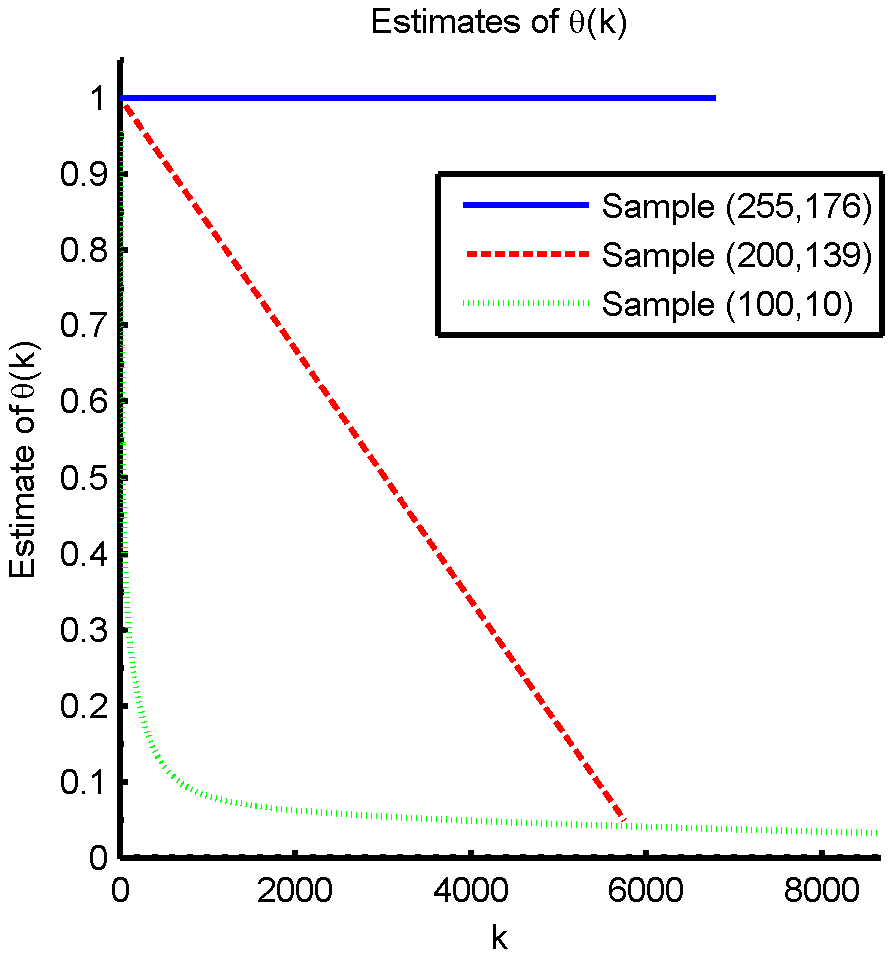}
\end{center}
\caption{{\bf Sequential estimation.} Plots of $\hat\theta(k)$, with $k=1:n_y$, for three pairs of samples of the HMP data.}
\label{Fig:ThetaCurveMap}
\end{figure}

\section*{Tables}

\begin{table}[h!]
\caption{{\bf HMP data.} Summary of V35 16S rRNA data processed by Qiime into an OTU table.}
\begin{center}
\begin{tabular}{|l|l|l|}
     \hline
Body Supersite &Body Subsite &Assigned Labels\\\hline
Airways&Anterior Nares&1-5  \\\cline{2-3}
			 &Throat        &6-17 \\\hline
     	 
Gastrointestinal Tract&Stool&18-47\\\hline
     					        
Oral&Attached/Keratinized Gingiva&48-59  \\\cline{2-3}
		&Buccal Mucosa               &60-76  \\\cline{2-3}
		&Hard Palate                 &77-90  \\\cline{2-3}
		&Palatine Tonsils            &91-112 \\\cline{2-3}
		&Saliva											 &113-122\\\cline{2-3}
		&Subgingival Plaque          &123-144\\\cline{2-3}
		&Supragingival Plaque        &145-167\\\cline{2-3}
		&Tongue Dorsum               &168-191\\\hline

Skin&Left Antecubital Fossa     &192-195\\\cline{2-3}
    &Left Retroauricular Crease &196-217\\\cline{2-3}
    &Right Antecubital Fossa    &218-222\\\cline{2-3}
    &Right Retroauricular Crease&223-242\\\hline

Urogenital Tract&Mid Vagina       &243-248\\\cline{2-3}
							  &Posterior Fornix &249-259\\\cline{2-3}
							  &Vaginal Introitus&260-266\\\hline
\end{tabular}
\end{center}
\label{tab:1}
\end{table}

\begin{table}[ht!]
\caption{{\bf Sample comparisons.} Summary of estimates for three pairs of samples of the HMP data.}
\begin{center}
\begin{small}
\begin{tabular}{|c|c|c|c|c|c|c|c|c|}
\hline
Urn-$x$ & Urn-$y$ & $n_x$ & $n_y$ & $\hat{\theta}(n_y)$ & $\hat\theta(n_y)-\hat\theta(n_y-1)$ & Regression Error & $S(n_y)$ & $\hat\rho^{n_y}$ \\
\hline
255 & 176 & 5054  & 6782 & 0.9998 & 0.0                      &0.0 & 1.9892$\times10^{-4}$ & 0.0     \\
\hline
200 & 139 & 12747 & 5739 & 0.0499 & -1.6533$\times 10^{-4}$ & 6.8306$\times10^{-6}$ & 1.9286$\times10^{-3}$ & 0.9997 \\
\hline
100 & 10  & 6206  & 8655 & 0.0324 & -2.9416$\times 10^{-6}$ & 0.0438 & 2.2477$\times10^{-3}$ & 0.5130 
\\
\hline
\end{tabular}
\end{small}
\end{center}
\label{tab:2}
\end{table}

\newpage

\section*{Supporting Information Legends}

\noindent{\bf File S1.} Summary Metadata related to Table 1 (tab-limited text file).\\

\noindent{\bf File S2.} OTU table related to Table 2 and Figures 1-4 (tab-limited text file).

\end{document}